\documentclass[12pt,reqno]{amsart}
\usepackage{geometry, graphicx, url, array}
\usepackage{amsfonts,color,amsmath,amssymb,amsthm}
\usepackage{placeins}

\usepackage{array}
\usepackage{booktabs}

\newcommand{\F}{{\mathbb F}}

\newcommand{\ra}{\rangle}
\newcommand{\la}{\langle}

\newcommand{\cM}{{\mathcal M}}

\newcommand{\PAut}{\mathrm{PAut}}

\newcommand{\Aut}{\mathrm{Aut}}
\newcommand{\sym}{\mathrm{Sym}}

\newcommand{\Sp}{\mathrm{Sp}}

\newcommand{\PG}{\mathrm{PG}}

\newtheorem{theorem}{Theorem}

\newtheorem{lemma}[theorem]{Lemma}
\newtheorem{corollary}[theorem]{Corollary}

\newtheorem{proposition}[theorem]{Proposition}

\newtheorem{example}[theorem]{Example}
\numberwithin{equation}{section}
\numberwithin{theorem}{section}
\numberwithin{table}{section}
\newtheorem{conjecture}[theorem]{Conjecture}
\newtheorem{remark}[theorem]{Remark}

\begin{document}

\title[]{On the lengths of MDS codes with a two-transitive permutation automorphism group}
\author[Deng, Feng, Vasil'ev]{Haihua Deng, Tao Feng, Andrey V. Vasil'ev}
\address{Haihua Deng, School of Mathematical Sciences, Zhejiang University, 866 Yuhangtang Road,  Hangzhou 310058, Zhejiang, China}
\email{haihua.deng@zju.edu.cn}
\address{Tao Feng, School of Mathematical Sciences, Zhejiang University, 866 Yuhangtang Road,  Hangzhou 310058, Zhejiang, China}
\email{tfeng@zju.edu.cn}
\address{Andrey V. Vasil'ev, Sobolev Institute of Mathematics, Novosibirsk, 630000, Russia} \email{vasand@math.nsc.ru}

\subjclass[2020]{Primary 94B05, 94B65; Secondary 20B20, 20C20, 05B25}
\keywords{MDS code, MDS conjecture, permutation automorphism group, 2-transitive group.}

\begin{abstract}
Let $C$ be an $[n,k]_q$ maximum distance separable (MDS) code with $4\le k\le q-3$, and suppose that it has a $2$-transitive permutation automorphism group. In this paper we show that $n\le q+1$, so the MDS conjecture holds for this class of codes.
\end{abstract}

\maketitle


\section{Introduction}

Let $q$ be a prime power, and let $\F_q$ be the finite field with $q$ elements. An $[n,k,d]_q$ code $C$ is a $k$-dimensional subspace of $\mathbb{F}_q^n$ with minimum Hamming distance $d$. If $d$ is unspecified, we refer to $C$ as an $[n,k]_q$ code. If $2\le k\le n-2$, then we call the code $C$ \textit{nontrivial}. We say that it is a \textit{maximum distance separable} (MDS) code if it attains equality in the Singleton bound, i.e., $d=n-k+1$ \cite{Singleton1964}. The MDS conjecture, which we state below, is a fundamental and long-standing problem in coding theory.
\begin{conjecture}\label{conj_MDS}
Let $C$ be a nontrivial $[n,k]_q$ MDS code. Then $n\le q+1$, except when $q$ is even and $k\in\{3,q-1\}$, in which case $n\le q+2$.
\end{conjecture}

The MDS conjecture has its roots in a statistical problem discussed in 1952 by Bush in \cite{Bush1952}. Segre started the influential geometric approach to this problem in \cite{Segre1955Curve}, which was widely adopted and eventually led to the breakthrough of Ball \cite{Ball2012} in 2012. We give a brief description of the relation with finite geometry as follows. Let $C$ be an $[n,k]_q$ MDS code with $k\ge 2$, and let $\mathrm{PG}(k-1,q)$ be the projective space whose points are the one-dimensional subspaces of $\mathbb{F}_q^k$. Let $M$ be a generator matrix of $C$, and let $\cM$ be the set of $n$ projective points corresponding to the column vectors of $M$. It is well known that any $k$ column vectors of $M$ are linearly independent, cf. \cite{HuffmanPless2003} or \cite{MacWilliamsSloane1977}. It follows that any $k$ points of $\cM$ do not lie on a hyperplane, i.e., $\cM$ is a $n$-arc of $\mathrm{PG}(k-1,q)$. Conversely, every $n$-arc gives rise to an $[n,k]_q$ MDS code, cf. \cite{HirschfeldThas1991}. Segre \cite{Segre1955Curve} associated an algebraic curve to an $n$-arc, and this perspective led to important progress towards the MDS conjecture by using deep tools from algebraic geometry, cf. \cite{BlokhuisBruenThas1988,BlokhuisBruenThas1990,Voloch1991,StormeThas1993,HirschfeldKorchmaros1996,HirschfeldKorchmaros1998}. For an exposition of geometric techniques and recent progress on the MDS conjecture, we refer the reader to the comprehensive survey \cite{BallLavrauw2020}.

The permutation automorphism group $\mathrm{PAut}(C)$ of a code $C$ (see Section \ref{sec:prelim} for definition), referred to as the permutation group of $C$ for short, is an important research topic in coding theory. MacWilliams explored $\mathrm{PAut}(C)$ to develop efficient permutation decoding algorithms \cite{MacWilliams1964}, and the same idea has recently been developed into automorphism ensemble decoding for Reed--Muller codes \cite{GeiselhartEtAl2021}. In 2012 Kaufman and Lubotzky \cite{KaufmanLub} constructed the first family of asymptotically good symmetric LDPC codes, where a code $C$ of length $n$ is symmetric if $\mathrm{PAut}(C)$ is transitive on the $n$ coordinate positions. The extremal self-dual codes over small fields with $2$-transitive permutation groups have been studied in \cite{MalevichWillems2014, ChigiraHaradaKitazume2014}. We refer to \cite{KnappSchmid1980,KnappRodrigues2021,PaceSonnino2017} for group-based constructions of linear codes, \cite{BienertKlopsch2010,FengHollmannLiXiang2026,GuendaGulliver2013,MaYan2025} for recent progress on the permutation groups of cyclic codes, and refer to \cite{Huffman1998} for a detailed description of the relation between codes and groups.

In this paper, we initiate the study of the length of an MDS code $C$ with large transitive permutation group $\mathrm{PAut}(C)$. We list the known MDS codes with a $2$-transitive permutation group as follows; a complete classification of such codes is still unavailable.

\begin{example}\label{ex:two-transitive-mds-codes}
\text{ }
\begin{enumerate}
\item The repetition code $J$ of length $n$ over $\F_q$ spanned by the all-$1$ vector is an $[n,1,n]_q$ code; the code $A=J^{\perp}$ is an $[n,n-1,2]_q$ code, and the whole space $\F_q^n$ is an $[n,n,1]_q$ code. They are all trivial MDS codes with permutation automorphism group $\sym(n)$.
\item Let $\F_r\subseteq\F_q$ and $1\le k\le r$. We define
\[
\mathrm{RS}_{r,k}(\F_q)=\left\{\left(f(x)\right)_{x\in\F_r}: f\in\F_q[X],\ \deg f<k \right\}.
\]
This is an $[r,k,r-k+1]_q$ MDS code, and $\mathrm{AGL}_1(r)\le\mathrm{PAut}(C)$ acts $2$-transitively on the set $\F_r$ of coordinate positions.

\item Let $r=2^h$, $\F_r\subseteq\F_q$, and take $1\le e<h$ and $\gcd(e,h)=1$. Let
\[
\mathcal T_e(r)=\left\langle(1)_{x\in\F_r},
 (x)_{x\in\F_r}, (x^{2^e})_{x\in\F_r}\right\rangle_{\F_q}\le\F_q^r,
\]
and for $r\ge8$ let
\[
 \mathcal B_e(r)=\left\langle (1)_{x\in\F_r},
 (x)_{x\in\F_r}, (x^{2^e})_{x\in\F_r},(x^{2^e+1})_{x\in\F_r}
 \right\rangle_{\F_q}\le\F_q^r.
\]
Then $\mathcal T_e(r)$ and $\mathcal B_e(r)$ are $[r,3,r-2]_q$ and $[r,4,r-3]_q$ MDS codes respectively. The group $\mathrm{AGL}_1(r)$ acts $2$-transitively on the set $\F_r$ of coordinate positions. These codes arise from a translation hyperoval and Segre's $(r+1)$-arc respectively, cf. \cite[Examples~4.1 and~4.2]{BallLavrauw2020}. If $e=2$ and $\gcd(e,h)=1$, Berger showed that the dual of $\mathcal T_2(r)$ is an affine-invariant MDS code not equivalent to Reed--Solomon codes \cite{Berger1993}.

\item Let $\mathcal H_6$ be the $[6,3,4]_4$ hexacode. If $\F_4\subseteq\F_q$, then the $\F_q$-span of all its codewords is a $[6,3,4]_q$ MDS code. Its permutation automorphism group contains $\mathrm{PSL}_2(5)\cong A_5$, which is $2$-transitive on the $6$ coordinate positions.
\end{enumerate}
\end{example}

Our main result in this paper is the following theorem.
\begin{theorem}\label{thm:main}
Let $C$ be an $[n,k]_q$ MDS code with $4\le k\le q-3$. If $\mathrm{PAut}(C)$ is $2$-transitive on the $n$ coordinate positions, then $n\le q+1$.
\end{theorem}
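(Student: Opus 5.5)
We argue by contradiction: assume $n\ge q+2$ and $G=\PAut(C)$ is $2$-transitive on the $n$ coordinates. Two reductions come first. Since $C^\perp$ is an $[n,n-k]_q$ MDS code with the same permutation group, and $4\le k\le q-3$ gives $3\le n-k$, we may pass between $C$ and $C^\perp$ as convenient. Also, the column points $\cM\subseteq\PG(k-1,q)$ form an $n$-arc. Since $G$ preserves $C$, it acts on $\cM$ through a subgroup of $\mathrm{P\Gamma L}_k(q)$ (in fact through monomial maps of the coordinates, which are trivial on the scalars). So $G$ acts faithfully and $2$-transitively on an arc of size at least $q+2$.

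The known bounds on arcs then narrow the range of $n$. Ball's theorem \cite{Ball2012} gives the MDS conjecture for $q$ prime, and more generally for $k\le p$, where $q=p^h$. Combined with the classical bound $n\le q+k-1$ (from projection or the Bush bound), this gives
\[
q+2\le n\le q+k-1\le 2q-4 .
\]
We then invoke the classification of finite $2$-transitive groups (affine and almost simple) and go through the list, asking which groups of degree $n$ in this window can act as $\PAut$ of an MDS code of dimension $4\le k\le q-3$.

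The key tool is linear-algebraic. Fix a point stabilizer $G_\alpha$, or the stabilizer of a $(k-1)$-subset of coordinates. Shortening and puncturing $C$ at coordinates fixed by a subgroup $H\le G$ gives an MDS code on which $H$ still acts. The MDS property forces strong restrictions.
\begin{itemize}
\item An element fixing at least $k$ coordinates and acting as a permutation on a codeword must fix a codeword supported away from them. Because any $k$ coordinates form an information set, a nontrivial element of $G$ can fix at most $k-1$ coordinates unless it acts trivially on $C$. It cannot act trivially, since $\PAut$ acts faithfully.
\item Consequently, every nonidentity element of $G$ has at most $k-1\le q-4$ fixed points.
\end{itemize}

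For the affine groups, $n=r^d$ is a prime power. The translations and the large stabilizers in $\mathrm{AGL}_d(r)$, such as transvections or field automorphisms, have many fixed points. The fixed-point bound then forces $d=1$, and the degree bound forces $n\le q+k-1$ with $n$ a prime power $\ne q+1$. For $n=r$ a prime, use the cyclic subgroup: $C$ is then a cyclic MDS code over $\F_q$ of length $r$ coprime to $q$. Its defining set consists of unions of $q$-cyclotomic cosets modulo $r$, and the MDS condition (via BCH/Roos-type arguments) forces $k$ or $n-k$ to be at most $2$, or forces $r\mid q-1$ and hence $r\le q$. For almost simple groups, most families fail the window $q+2\le n\le 2q-4$ together with the fixed-point bound. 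Examples are $A_n$ and $S_n$ (which contain $3$-cycles with $n-3>k-1$ fixed points), $\mathrm{PSL}_d(r)$ with $d\ge3$ (elations fix a hyperplane), symplectic and sporadic groups, and unitary, Suzuki and Ree groups (elements with a large fixed set). This leaves $\mathrm{PSL}_2(r)$ of degree $r+1$, where we again use the Borel subgroup acting as $\mathrm{AGL}_1$-like on $r$ points and reduce to the cyclic case above.

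The main obstacle is the cyclic/affine case, where $n$ is a prime or a prime power just above $q+1$. The fixed-point argument gives nothing there, and a genuine MDS property of constacyclic codes is needed. The approach we would try first is to show that $C\otimes\overline{\F}_q$ decomposes into eigenvectors $(\zeta^{ij})_i$. We would then apply a Vandermonde/Chebotarev-type argument to show that all $k\times k$ minors are nonzero only if the exponent set is an arithmetic progression. That, in turn, forces the embedding $\F_r\subseteq\F_q$ as in the Reed--Solomon codes of Example \ref{ex:two-transitive-mds-codes}, giving $n=r\le q$. This contradicts $n\ge q+2$.
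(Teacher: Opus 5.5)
Your proposal has two genuine gaps: the fixed-point sieve does not eliminate the hard families, and the cyclic-code endgame rests on claims that are false or open.

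\textbf{The fixed-point sieve is too weak.} Your fixed-point observation itself is correct. If $g$ fixes an information set, it fixes every codeword. If moreover $g\neq 1$, then $e_\omega-e_{\omega^g}$ is a weight-$2$ word of $C^\perp$, which is impossible. By duality, $g$ fixes at most $\min\{k,n-k\}-1$ points.

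But this only says that nonidentity elements fix fewer than $n/2$ points, and that is far too weak for the elimination you claim. It does kill $A_n$, $S_n$, $\mathrm{PSL}_d(2)$ and $\Sp_{2m}(2)$ on $\Omega^-$. It does not touch the families that carry the difficulty:
\begin{itemize}
\item $\mathrm{PSL}_2(r)$ and $\mathrm{Sz}(r)$ are Zassenhaus groups, so nonidentity elements of the socle fix at most $2$ points. In particular, your claim that Suzuki groups have elements with a large fixed set is false.
\item In $\mathrm{PSU}_3(r)$ and ${}^2G_2(r)$, nonidentity elements of the socle fix at most $r+1$ of the $r^3+1$ points.
\item Elations and homologies of $\mathrm{PSL}_d(r)$ with $r\ge 3$ fix roughly $n/r<n/2$ points.
\item Transvections in $\mathrm{AGL}_d(r)$ with $r\ge 3$ fix $n/r$ points, so nothing forces $d=1$.
\item No nonidentity element of $\Sp_{2m}(2)$ fixes more than $2^{2m-2}<n/2$ points of $\Omega^+$.
\item No nonidentity element of $M_{24}$ fixes more than $8$ of the $24$ points. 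So $M_{24}$ with $q=16$ and $9\le k\le 13$ passes every test you list.
\end{itemize}

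\textbf{The cyclic endgame is not available.}
\begin{itemize}
\item Your dichotomy for cyclic MDS codes of prime length $r$ coprime to $q$ is false as stated. The BCH codes with defining set $\{-t,\dots,t\}$ modulo $q+1$ are cyclic $[q+1,q-2t,2t+2]_q$ MDS codes, and $q+1=17$ is prime for $q=16$, yet $17\nmid 15$.
\item For $r\ge q+2$, the dichotomy is just the MDS conjecture for cyclic codes, which you cannot quote.
\item The Chebotarev-type lemma fails in characteristic $p$. Puncturing $\mathcal T_2(32)$ from Example~\ref{ex:two-transitive-mds-codes} at $0$ gives a cyclic $[31,3,29]_{32}$ MDS code. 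Its exponent set $\{0,1,4\}$ is not an arithmetic progression modulo $31$.
\item For $n=\ell^a$ with $a\ge 2$, the translation group is not cyclic, so the reduction does not apply.
\item For $G=\mathrm{PSL}_2(r)$, the point stabilizer acts on $\F_r$ only as $x\mapsto a^2x+b$. Its characteristic-$2$ permutation module has nontrivial submodules of quadratic-residue type. That is exactly where the hexacode lives, so no reduction to a cyclic argument can dispose of this family without identifying those submodules.
\end{itemize}

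\textbf{The missing idea.} Treat $C\otimes\overline{\F}$ as a submodule of the permutation module $\overline{\F}^\Omega$. If the heart $A/(A\cap J)$ is irreducible, then $C\in\{0,J,A,M\}$, so $C$ is trivial.

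In the affine case, Mortimer's list of reducible hearts forces $p=\ell$. Then $n=p^a\le 2q-2<p^{f+1}$ gives $n\le q$ at once, replacing your entire cyclic analysis. For $n=r$ prime you do not even need the list: $G=\mathrm{AGL}_1(r)$, and the multipliers permute the nontrivial eigenspaces transitively.

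In the remaining almost simple cases, use the known submodule lattices together with the Frobenius twist $\sigma$. These show that $C$ is realized over a small field $\F_{p^b}$, whence $n\le 2p^b-2$ by Lemma~\ref{lem_finFieldObs}.
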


The rest of this paper is organized as follows. In Section~\ref{sec:prelim}, we fix the notation and prove some preliminary lemmas that we shall use later. In Section~\ref{sec:reduction}, we apply Mortimer's results on modular representations of $2$-transitive permutation groups from \cite{mortimer1980modular} and some related results from \cite{GuralnickTiep2011} to reduce the proof of Theorem \ref{thm:main} to the case where $\PAut(C)$ is one of some infinite families of almost simple groups. In Section~\ref{sec:almost-simple} we treat those infinite families and complete the proof of Theorem \ref{thm:main}. We conclude this paper with some related research problems in Section~\ref{sec:concluding}.

\section{Preliminaries}\label{sec:prelim}

Let $C$ be an $[n,k]_q$ code whose coordinates are labeled by an $n$-element set $\Omega$. It is conventional to take $\Omega=\{1,2,\ldots,n\}$, but we will allow $\Omega$ to be other sets. The support of a codeword $c\in C$ is $\mathrm{supp}(c)=\{\omega\in\Omega:c_\omega\ne0\}$, and its Hamming weight is the size of $\mathrm{supp}(c)$. We write $d(C)$ for the smallest nonzero Hamming weight of codewords in $C$.

Let $\F=\F_q$ for the finite field with $q$ elements, where $q=p^f$ with $p$ prime. Let $\F^\Omega$ be the set of all functions $f:\Omega\rightarrow \F$. For each $\omega\in\Omega$, let $e_\omega$ be the function that takes $1$ at $\omega$ and takes $0$ otherwise. For a subset $\Delta\subseteq\Omega$, write $\mathbf{1}_\Delta=\sum_{\omega\in\Delta}e_\omega$, and put $\mathbf{1}=\mathbf{1}_\Omega$. For $f,f'\in\F^\Omega$, their standard inner product is $\la f,f'\ra=\sum_{\omega\in\Omega}f(\omega)f'(\omega)$. We identify $C$ as a subspace of $\F^\Omega$ via
\[
c=(c_\omega)_{\omega\in\Omega}\mapsto \sum_{\omega\in\Omega}c_\omega e_\omega,
\]
and write $c$ for $\sum_{\omega\in\Omega}c_\omega e_\omega$ by abuse of notation. The \textit{dual code} of $C$ is $C^\perp=\{x\in \F^\Omega:\langle x,c\rangle=0\text{ for each }c\in C\}$, which has dimension $n-k$.

Let $\sym(\Omega)$ be the set of all permutations of $\Omega$. It induces an action on $\F^\Omega$ as follows:
\[
(f^g)(\omega)=f(\omega^{g^{-1}}),\;\textup{ for }g\in\sym(\Omega),\,\omega\in\Omega,\, f\in\F^\Omega.
\]
In particular, we have $e_\omega^g=e_{\omega^g}$. This action preserves the standard inner product:
\begin{equation}\label{eqn_presInnProd}
\la f^g,f'^g\ra=\la f,f'\ra \textup{ for } f,f'\in\F^\Omega,\;g\in\sym(\Omega).
\end{equation}
We define the permutation (automorphism) group of the code $C$ as follows:
\[
\mathrm{PAut}(C)=\{g\in\sym(\Omega):c^g\in C\textup{ for each }c\in C\}.
\]
By \eqref{eqn_presInnProd}, we immediately deduce the following result.
\begin{proposition}\label{prop:PAutdual}
  We have $\mathrm{PAut}(C)=\mathrm{PAut}(C^\perp)$.
\end{proposition}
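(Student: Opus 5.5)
The plan is to prove the two inclusions $\PAut(C)\subseteq\PAut(C^\perp)$ and $\PAut(C^\perp)\subseteq\PAut(C)$ directly from the invariance \eqref{eqn_presInnProd} of the standard inner product. The reverse inclusion will then come for free from the biduality $(C^\perp)^\perp=C$.

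For the first inclusion, let $g\in\PAut(C)$ and $x\in C^\perp$; I need $x^g\in C^\perp$, i.e., $\la x^g,c\ra=0$ for every $c\in C$. Since $g^{-1}$ also lies in $\PAut(C)$ (justified below), we have $c^{g^{-1}}\in C$. Applying \eqref{eqn_presInnProd} with the permutation $g$ to the pair $x,c^{g^{-1}}$ gives
\[
\la x^g,c\ra=\la x^g,(c^{g^{-1}})^g\ra=\la x,c^{g^{-1}}\ra=0 .
\]
Here $(c^{g^{-1}})^g=c$ because the formula $(f^g)(\omega)=f(\omega^{g^{-1}})$ defines a right action of $\sym(\Omega)$ on $\F^\Omega$. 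Hence $x^g\in C^\perp$.

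The one point needing care is that $g\in\PAut(C)$ implies $g^{-1}\in\PAut(C)$, since $\PAut(C)$ was defined only as the set of permutations mapping $C$ into $C$. The map $c\mapsto c^g$ is linear and injective on $\F^\Omega$, so it maps the finite-dimensional space $C$ injectively into itself and therefore onto $C$. Consequently $g^{-1}$ also maps $C$ into $C$. Equivalently, $\PAut(C)$ is a subset of the finite group $\sym(\Omega)$ closed under composition, hence a subgroup.

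For the second inclusion, apply the first inclusion to $C^\perp$ in place of $C$, which gives $\PAut(C^\perp)\subseteq\PAut((C^\perp)^\perp)$. The standard inner product is nondegenerate, and $\dim C^\perp=n-\dim C$. Since clearly $C\subseteq (C^\perp)^\perp$ and both spaces have dimension $\dim C$, we get $(C^\perp)^\perp=C$. This yields equality of the two groups. There is no real obstacle: the argument is routine bookkeeping with the action conventions and the bijectivity remark above.
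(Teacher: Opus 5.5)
Your proof is correct and follows the same route as the paper, which deduces the proposition immediately from the invariance \eqref{eqn_presInnProd} of the standard inner product. You have simply written out the routine details the paper leaves implicit: that $g^{-1}\in\PAut(C)$, the right-action convention, and the biduality $(C^\perp)^\perp=C$.
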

If $\mathrm{PAut}(C)$ acts transitively on the set $\{(\alpha,\beta)\in\Omega^2:\alpha\ne\beta\}$, then we say that $\mathrm{PAut}(C)$ is $2$-transitive on the coordinate set $\Omega$. The $2$-transitive permutation groups have been classified, and the complete list can be found in \cite[Tables~7.3, 7.4]{Cameron1999} or \cite[Section~7.7]{DixonMortimer1996}.

\begin{remark}\label{rem_ElePAut}
Here is a matrix description of $g\in \mathrm{PAut}(C)$. Let $B_g$ be the $n\times n$ permutation matrix such that $B_g(\omega,\omega')=1$ if $\omega'=\omega^g$ and $=0$ otherwise. Let $M$ be a generator matrix of $C$. Since $g$ stabilizes $C$, the row space of $MB_g$ is $C$. There is a corresponding invertible matrix $P_g$ of order $k$ such that $P_gM=MB_g$. We call $B_g$ the permutation matrix associated with $g$, and call $P_g$ the companion matrix with respect to the generator matrix $M$.
\end{remark}

We shall make use of the following well-known duality fact multiple times.
\begin{lemma}\label{lem_duality}
Let $C$ be a nontrivial $[n,k]_q$ code. Then $C$ is MDS if and only if $C^\perp$ is MDS.
\end{lemma}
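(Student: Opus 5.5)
The statement to prove is Lemma~\ref{lem_duality}: for a nontrivial $[n,k]_q$ code $C$, $C$ is MDS if and only if $C^\perp$ is MDS. The plan is to show one implication only. Since $(C^\perp)^\perp=C$ and $C^\perp$ is again nontrivial (its dimension $n-k$ also lies in $[2,n-2]$), the other implication follows by applying the same argument to $C^\perp$. The tool is the standard characterization of MDS codes by the independence of columns of a parity-check matrix.

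\textbf{Step 1: a characterization of $d(C)$.} Let $H$ be an $(n-k)\times n$ generator matrix of $C^\perp$, so that $C=\{c:Hc^T=0\}$. A nonzero codeword of $C$ supported on $\Delta\subseteq\Omega$ is exactly a nontrivial linear dependence among the columns of $H$ indexed by $\Delta$. Hence $d(C)\ge d$ holds if and only if every $d-1$ columns of $H$ are linearly independent. It follows that $C$ is MDS, i.e.\ $d(C)=n-k+1$, if and only if every $n-k$ columns of $H$ are linearly independent. Equivalently, every $(n-k)\times(n-k)$ minor of $H$ is nonzero.

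\textbf{Step 2: translate this to weights in $C^\perp$.} Assume $C$ is MDS and suppose some nonzero $x\in C^\perp$ has weight at most $k$. Then $x$ vanishes on a set $S$ of at least $n-k$ coordinates. Write $x=yH$ with $y\ne0$, which is possible because $H$ has full row rank. Restrict to $n-k$ of the coordinates in $S$. The corresponding $(n-k)\times(n-k)$ submatrix $H_S$ then satisfies $yH_S=0$, so $H_S$ is singular. This contradicts Step 1. Therefore $d(C^\perp)\ge k+1=n-(n-k)+1$.

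\textbf{Step 3: conclude.} The Singleton bound gives $d(C^\perp)\le n-(n-k)+1=k+1$, so $d(C^\perp)=k+1$ and $C^\perp$ is MDS. The reverse implication follows by the symmetry noted at the start.

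The only step that needs care is Step 1, namely correctly identifying minimal supports with column dependencies. Even that step is routine.
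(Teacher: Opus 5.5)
Your proof is correct. Step 1 gives the standard parity-check characterization. Step 2 correctly shows that, when every maximal minor of a generator matrix $H$ of $C^\perp$ is nonzero, no nonzero word $yH$ can vanish on $n-k$ coordinates. Together with the Singleton bound and $(C^\perp)^\perp=C$, this gives the equivalence.

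The paper gives no proof of this lemma; it quotes it as a well-known fact (cf.\ \cite[Chapter~11]{MacWilliamsSloane1977}). Your argument is essentially the textbook proof, and it uses the same column-independence language the paper uses in the introduction and in the proof of Lemma~\ref{lem_scalar-extension}.
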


Let $G=\mathrm{PAut}(C)$. The vector space $\F^\Omega$ is naturally an $\F G$-module, and $C$ is its submodule. Let $E$ be an extension field of $\F$. We write $C\otimes E$ for the $E$-span of $C$, which is a subspace of $E^\Omega$. It has dimension $k$ over $E$, where $k=\dim_{\F}(C)$. Moreover, if $v_1,\cdots,v_k$ form a basis of $C$ over $\F$, then they also form a basis of $C\otimes E=\la v_1,\ldots,v_k\ra_{E}$.

 \begin{lemma}\label{lem_scalar-extension}
 Let $E$ be an extension field of $\F$, and let $C\le\F^\Omega$ be an MDS code. Then $C\otimes E$ is an MDS code over $E$ with the same parameters as $C$, and $\mathrm{PAut}(C)=\mathrm{PAut}(C\otimes E)$.
 \end{lemma}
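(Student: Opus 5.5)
We need to prove two things: that $C\otimes E$ is MDS over $E$ with the same parameters, and that $\mathrm{PAut}(C)=\mathrm{PAut}(C\otimes E)$. Both will be handled through a generator matrix $M$ of $C$ over $\F$, which is a $k\times n$ matrix with entries in $\F$. By the remark preceding the lemma, the rows of $M$ also form an $E$-basis of $C\otimes E$, so $M$ is a generator matrix of $C\otimes E$ and $\dim_E(C\otimes E)=k$.

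\emph{MDS property.} We use the characterization recalled in the introduction: a $[n,k]$ code is MDS if and only if every $k$ columns of a generator matrix are linearly independent. Equivalently, every $k\times k$ minor of $M$ is nonzero. Whether a determinant with entries in $\F$ vanishes does not depend on whether we regard it in $\F$ or in $E$. Hence every $k\times k$ minor of $M$ is nonzero in $E$ as well, and $C\otimes E$ is an $[n,k,n-k+1]$ MDS code over $E$. The degenerate case $k=0$ is trivial. Alternatively, one can note that $C\otimes E$ has a codeword of weight at most $n-k$ exactly when some set of $k$ columns becomes dependent over $E$, which is again a rank condition.

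\emph{Inclusion $\mathrm{PAut}(C)\le\mathrm{PAut}(C\otimes E)$.} This is immediate. The action of $\sym(\Omega)$ on $E^\Omega$ is $E$-linear and extends the action on $\F^\Omega$. So if $g$ stabilizes $C$, it stabilizes the $E$-span of $C$.

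\emph{Reverse inclusion.} This is the only step needing an argument. Suppose $g$ stabilizes $C\otimes E$. Then $C^g\subseteq (C\otimes E)\cap\F^\Omega$, so it suffices to show $(C\otimes E)\cap \F^\Omega=C$. We argue by rank. Let $v\in\F^\Omega$ lie in the $E$-row space of $M$. Then the matrix $M$ with the row $v$ appended has rank $k$ over $E$. Rank is computed by minors, so it is invariant under field extension, and this matrix also has rank $k$ over $\F$. Since $M$ has rank $k$ over $\F$, it follows that $v$ is in the $\F$-row space of $M$, that is, $v\in C$. Therefore $C^g\subseteq C$, and $g\in\mathrm{PAut}(C)$.

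The MDS hypothesis is not needed for the equality of permutation groups. The main (mild) obstacle is just this descent step $(C\otimes E)\cap\F^\Omega=C$, which the rank argument handles.
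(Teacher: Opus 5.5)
Your proof is correct, and its MDS part matches the paper's: linear independence of any $k$ columns, i.e.\ nonvanishing of the $k\times k$ minors, does not change when the field is enlarged.

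For the reverse inclusion your route differs. The paper takes $g\in\PAut(C\otimes E)$ and writes $P_gM=MB_g$ with the companion matrix $P_g$ of Remark~\ref{rem_ElePAut}, a priori over $E$. It then compares the first $k$ columns, $P_gM_0=N_0$. Since $M_0$ is invertible by the MDS property and $M_0,N_0$ have entries in $\F$, it gets $P_g=N_0M_0^{-1}$ over $\F$, so $g$ stabilizes $C$.

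You instead prove the descent statement $(C\otimes E)\cap\F^\Omega=C$, using invariance of rank under field extension, and then only need $C^g\subseteq\F^\Omega$.

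The paper's version is a one-line explicit computation that fits its companion-matrix formalism. Yours isolates a general fact valid for every linear code, and so makes transparent that the MDS hypothesis plays no role in the equality of permutation groups. The paper's argument would also work without MDS, by using any $k$ linearly independent columns of $M$ instead of the first $k$. Both arguments rest on the same point: a linear system with coefficients in $\F$ that has a unique solution over $E$ has that solution in $\F$.
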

 \begin{proof}
 Let $C'=C\otimes E$ in this proof. We choose a generator matrix $M$ of $C$ which has entries in $\F$, and it is also a generator matrix of $C\otimes E$. Let $k$ be the dimension of $C$. Any $k$ columns of $C$ are linearly independent over $\F$, so they are also linearly independent over $E$.  It follows that $C\otimes E$ is also an MDS code, cf. \cite[Chapter~11]{MacWilliamsSloane1977}.

 We clearly have $\mathrm{PAut}(C)\le\mathrm{PAut}(C')$. Take $g\in \mathrm{PAut}(C')$, and let $B_g$ be its permutation matrix, cf. Remark \ref{rem_ElePAut}. There is an invertible matrix $P_g$ of order $k$ over $E$ such that $P_gM=MB_g$. Let $M_0$ and $N_0$ be the matrices formed by the first $k$ columns of $M$ and $MB_g$ respectively. Then $M_0$ is invertible by the fact $C$ is an MDS case, and $M_0,N_0$ both have entries in $\F$. It follows that $P_g=N_0M_0^{-1}$ and it has entries in $\F$. We conclude that $g$ stabilizes $C$, i.e., $g\in{\rm PAut}(C)$ as desired. This completes the proof.
 \end{proof}

\begin{lemma}\label{lem_basic-bound}
If $C$ is a nontrivial $[n,k]_q$ MDS code, then $n\le q+\min\{k,n-k\}-1$. In particular, we have $n\le2q-2$.
\end{lemma}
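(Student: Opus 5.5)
We aim for the classical bound $n\le q+k-1$ via a counting argument on codewords of minimum weight. Once it is in hand, duality gives the rest. By Lemma \ref{lem_duality}, $C^\perp$ is an $[n,n-k]_q$ MDS code. Applying the same bound to it gives $n\le q+(n-k)-1$. Together these yield $n\le q+\min\{k,n-k\}-1$.

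To prove $n\le q+k-1$, we use the following standard fact. For an MDS code of dimension $k$ and any set $S\subseteq\Omega$ with $|S|=k-1$, the coordinates in $S$ form part of an information set. So the codewords vanishing on $S$ form a $1$-dimensional subspace. Its nonzero elements have weight exactly $n-k+1=d$, hence support exactly $\Omega\setminus S$.

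Now fix $S$ of size $k-2$; this requires $k\ge2$, which nontriviality guarantees. The subcode $C_S$ of codewords vanishing on $S$ has dimension $2$ and is MDS on the remaining $n-k+2$ coordinates. Its minimum distance is $n-k+1$, one less than its length, so every nonzero codeword of $C_S$ has at most one zero outside $S$. Each coordinate $\omega\notin S$ gives, by the fact above with $S\cup\{\omega\}$, a $1$-dimensional subspace of $C_S$ vanishing at $\omega$. Distinct $\omega$ give distinct subspaces, since a nonzero codeword cannot vanish at two coordinates outside $S$. These subspaces are among the $q+1$ one-dimensional subspaces of the $2$-dimensional space $C_S$. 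Hence $n-k+2\le q+1$, i.e. $n\le q+k-1$.

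For the final claim, nontriviality gives $\min\{k,n-k\}\le n/2$. So $n\le q+n/2-1$, which gives $n\le 2q-2$.

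The only delicate point is verifying that the subcode $C_S$ really has dimension $2$ with minimum weight exactly $n-k+1$. Both follow from the information-set characterization of MDS codes: any $k$ coordinates form an information set, equivalently any $k$ columns of a generator matrix are independent. I expect no genuine obstacle beyond this.
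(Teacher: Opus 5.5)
Your proof is correct and follows the same structure as the paper's: establish $n\le q+k-1$, apply it to the MDS dual $C^\perp$, and then use $\min\{k,n-k\}\le n/2$. The only difference is that the paper cites \cite[Lemma~1.2]{Ball2012} for $n\le q+k-1$, whereas you prove it directly via the two-dimensional shortened subcode $C_S$; this is the coding-theoretic form of the standard count of the $q+1$ hyperplanes through $k-2$ points of the associated arc, each containing at most one further point.
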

\begin{proof}
We have $n\le q+k-1$ by \cite[Lemma~1.2]{Ball2012}. Since $C^\perp$ is also an MDS code, we have $n\le q+(n-k)-1$ by the same reason. This proves the first claim. Since $\min\{k,n-k\}\le n/2$, it follows that $n\le q-1+n/2$, i.e., $n\le 2q-2$. 
\end{proof}
The following result is an immediate corollary of Lemma \ref{lem_basic-bound}.
\begin{corollary}\label{cor_ResValq}
If there exists a nontrivial $[n,k]_q$ MDS code with $n>q+1$, then $\frac{n+2}{2}\le q\le n-2$.
\end{corollary}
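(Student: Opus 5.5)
Corollary~\ref{cor_ResValq} follows directly from Lemma~\ref{lem_basic-bound}, so the plan is just to read off two inequalities. Let $C$ be a nontrivial $[n,k]_q$ MDS code with $n>q+1$, that is, $n\ge q+2$.

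For the lower bound on $q$, I will use the ``in particular'' part of Lemma~\ref{lem_basic-bound}, namely $n\le 2q-2$. Rearranging gives $q\ge (n+2)/2$.

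For the upper bound on $q$, I will use the hypothesis directly: $n>q+1$ means $n\ge q+2$ since both are integers, and hence $q\le n-2$.

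Combining the two gives $\frac{n+2}{2}\le q\le n-2$. I do not expect any obstacle; the only point to check is that nontriviality, $2\le k\le n-2$, is exactly the hypothesis Lemma~\ref{lem_basic-bound} needs.
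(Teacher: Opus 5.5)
Your proposal is correct and is exactly the intended argument. The paper gives no separate proof and simply calls the statement an immediate corollary of Lemma~\ref{lem_basic-bound}: the lower bound is the rearranged ``in particular'' bound $n\le 2q-2$, and the upper bound is just the integrality of $n>q+1$.
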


Let $\overline{\F}$ be the algebraic closure of $\F$. In particular, each polynomial of positive degree in $\F[x]$ factorizes into the product of degree $1$ polynomials in $\overline{\F}[x]$. To utilize Lemma \ref{lem_basic-bound} more efficiently, we need the following lemma. We define the Frobenius map
\begin{equation}\label{eqn_sigma}
\sigma:\overline{\F}^\Omega\rightarrow \overline{\F}^\Omega,\quad (x_\omega)_{\omega\in\Omega}\mapsto (x_\omega^p)_{\omega\in\Omega}.
\end{equation}

\begin{lemma}\label{lem_sigCommuG}
Let $U$ be an $\overline{\F} G$-submodule of $\overline{\F}^\Omega$, where $G$ is a subgroup of $\sym(\Omega)$. Then $\sigma(U)$ is also an $\overline{\F} G$-submodule of $\overline{\F}^\Omega$, and $\dim(U)=\dim(\sigma(U))$.
\end{lemma}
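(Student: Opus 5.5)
The plan is to check two things directly from the definitions: that $\sigma(U)$ is a subspace, and that it is stable under $G$. The key fact is that $\sigma$ is a bijective Frobenius-semilinear map of $\overline{\F}^\Omega$ that commutes with the permutation action.

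\textbf{Semilinearity and bijectivity.} Since $\overline{\F}$ is perfect, the map $x\mapsto x^p$ is an automorphism of $\overline{\F}$. Hence $\sigma$ is a bijection of $\overline{\F}^\Omega$. It is additive, because $(x+y)^p=x^p+y^p$ in characteristic $p$, and it satisfies $\sigma(\lambda u)=\lambda^p\sigma(u)$ for $\lambda\in\overline{\F}$. Since every $\mu\in\overline{\F}$ equals $\lambda^p$ for some $\lambda$, the image $\sigma(U)$ is closed under scalar multiplication by all of $\overline{\F}$, so it is an $\overline{\F}$-subspace.

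\textbf{Dimension.} Take a basis $u_1,\dots,u_m$ of $U$. Semilinearity shows that $\sigma(u_1),\dots,\sigma(u_m)$ span $\sigma(U)$. For independence, suppose $\sum \mu_i\sigma(u_i)=0$ and write $\mu_i=\lambda_i^p$. Then $\sigma\bigl(\sum\lambda_i u_i\bigr)=0$, so injectivity gives $\sum\lambda_i u_i=0$. Therefore all $\lambda_i=0$, hence all $\mu_i=0$, and $\dim\sigma(U)=\dim U$.

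\textbf{$G$-stability.} For $g\in G$ we have
\[
(\sigma(f))^g(\omega)=\sigma(f)(\omega^{g^{-1}})=f(\omega^{g^{-1}})^p=(f^g)(\omega)^p,
\]
so $\sigma(f)^g=\sigma(f^g)$. Consequently $\sigma(U)^g=\sigma(U^g)=\sigma(U)$, which makes $\sigma(U)$ an $\overline{\F}G$-submodule.

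I do not expect a real obstacle. The only point needing care is the surjectivity of the $p$-th power map on $\overline{\F}$: it is what guarantees that $\sigma(U)$ is closed under all scalars, and it is also used in the independence argument.
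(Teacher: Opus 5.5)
Your proposal is correct and follows the paper's route: the paper likewise shows that $\sigma$ commutes with the action of $G$ to get $G$-invariance of $\sigma(U)$, and uses the bijectivity of $\sigma$ for the equality of dimensions. Your extra step, using the surjectivity of $x\mapsto x^p$ on $\overline{\F}$ to show that $\sigma(U)$ is an $\overline{\F}$-subspace and that $\sigma$ preserves linear independence, justifies what the paper's short remark ``since $\sigma$ is a bijection'' leaves implicit, because $\sigma$ is only semilinear.
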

\begin{proof}
It is routine to verify that elements of $G$ and $\sigma$ commute. It follows that $\sigma(U)$ is invariant under the action of $G$, i.e., it is an $\overline{\F} G$-module. Since $\sigma$ is a bijection of $\overline{\F}^\Omega$, we clearly have $\dim(U)=\dim(\sigma(U))$. This completes the proof.
\end{proof}

\begin{lemma}\label{lem_FrobRealize}
Suppose that $U$ is a subspace of $\overline{\F}^\Omega$ such that $\sigma^b(U)=U$ for some positive integer $b$. Then $U':=U\cap\F_{p^b}^\Omega$ has the same dimension as $U$, and $U=U'\otimes \overline{\F}$.
\end{lemma}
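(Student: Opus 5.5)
This is the classical Galois descent (Hilbert 90 for vector spaces). Write $r=p^b$ and $\tau=\sigma^b$, so that $\tau$ acts coordinatewise by $x\mapsto x^{r}$. The plan is to show that $U$ has a basis of vectors with entries in $\F_r$. This gives both claims at once: the basis lies in $U'$, so its $\overline{\F}$-span contains $U$, and vectors with entries in $\F_r$ that are linearly independent over $\overline{\F}$ are independent over $\F_r$. Conversely, vectors of $\F_r^\Omega$ that are linearly independent over $\F_r$ remain independent over $\overline{\F}$; this is the same fact as in Lemma \ref{lem_scalar-extension}, seen via a nonvanishing minor. Hence $\dim_{\F_r}U'\le \dim U$, while $U'\otimes\overline{\F}\subseteq U$, and equality follows.

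To produce the basis I will use reduced row echelon form. Let $m=\dim U$, which is finite because $\Omega$ is finite. Choose a basis of $U$ and let $R$ be the unique $m\times|\Omega|$ matrix in reduced row echelon form whose row space is $U$. Apply $\tau$ entrywise to $R$ to get $\tau(R)$.

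Three properties of $\tau$ make this work:
\begin{itemize}
\item $\tau$ is additive and satisfies $\tau(\lambda x)=\lambda^{r}\tau(x)$, so it maps the row space of $R$ onto the $\overline{\F}$-span of the rows of $\tau(R)$.
\item Since $x\mapsto x^r$ is surjective on $\overline{\F}$, the image $\tau(U)$ is exactly the row space of $\tau(R)$, which is $U$ by hypothesis.
\item $\tau$ fixes $0$ and $1$ and preserves the positions of zero entries, so $\tau(R)$ is again in reduced row echelon form.
\end{itemize}
By uniqueness of reduced row echelon form, $\tau(R)=R$. Therefore every entry $x$ of $R$ satisfies $x^{r}=x$, so $x\in\F_r$. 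The rows of $R$ are then the desired basis, lying in $U'$.

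The only point needing care is that $\tau$ is semilinear rather than linear. This is why I will argue via row spaces and uniqueness of the echelon form, instead of treating $\tau$ as a linear map on $U$. An alternative route is to apply the same argument to a normalized basis of the dual subspace. No serious obstacle is expected.
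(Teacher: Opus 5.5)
Your proof is correct, but it takes a different route from the paper's.

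\textbf{The paper's argument.} It starts from an arbitrary basis of $U$. It lets $E$ be a finite extension of $\F_{p^b}$, of some degree $m$, containing all the coordinates, and picks an $\F_{p^b}$-basis $\gamma_0,\dots,\gamma_{m-1}$ of $E$. For each basis vector $v$ it forms the trace-type vectors $u_i=\sum_{j=0}^{m-1}\gamma_i^{p^{bj}}\sigma^{bj}(v)$. These lie in $U$ because $\sigma^b(U)=U$. They are fixed by $\sigma^b$ because $\sigma^{bm}$ acts trivially on $E^\Omega$, so $\sigma^b$ merely cycles the summands. The nonsingularity of the Moore-type matrix $(\gamma_i^{p^{bj}})_{i,j}$ (Lidl--Niederreiter, Corollary~2.38) then expresses $v$ as an $E$-combination of $u_0,\dots,u_{m-1}$.

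\textbf{Your argument.} You replace this averaging with the canonical reduced row echelon basis. Its uniqueness forces $\sigma^b$ to fix that basis entrywise.

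\textbf{Comparison.}
\begin{itemize}
\item Your route is more elementary and self-contained. It needs neither the auxiliary field $E$ nor the determinant fact, only that $x\mapsto x^{p^b}$ is injective with fixed field $\F_{p^b}$.
\item It produces an explicit canonical basis of $U'$.
\item You make explicit the step that vectors in $\F_{p^b}^\Omega$ independent over $\F_{p^b}$ stay independent over $\overline{\F}$, which the paper leaves as ``immediate''.
\item The paper's averaging argument is the standard Galois-descent (Hilbert~90) mechanism. It does not depend on fixing an ordering of $\Omega$, and it transfers to settings where echelon forms are unavailable.
\end{itemize}
For finite-dimensional subspaces of $\overline{\F}^\Omega$, both arguments are complete.
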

\begin{proof}
Take a basis $\mathcal{B}$ of $U$, whose coordinates all lie in some finite fields. Let $E$ be the smallest field that contains $\F_{p^b}$ and all those coordinates, which is again a finite field. Take a basis $\gamma_0,\ldots,\gamma_{m-1}$ of $E$ over $\F_{p^b}$. Let $U':=U\cap\F_{p^b}^\Omega$. For each vector $v\in \mathcal{B}$ and $0\le i\le m-1$, define $u_i=\sum_{j=0}^{m-1}\gamma_i^{p^{bj}}\sigma^{bj}(v)\in U$. We have $u_i\in U'$ upon direct check. The matrix $(\gamma_i^{p^{bj}})_{0\le i,j\le m-1}$ is nonsingular by \cite[Corollary~2.38]{LidlNiederreiter1997}, so $v$ lies in the $E$-span of $u_0,\ldots,u_{m-1}$.  It follows that $U\le U'\otimes \overline{\F}$, and $U$ is generated by a set of vectors in $U'$. The two claims then follow immediately. This completes the proof.
\end{proof}

Let $U$ be an $\overline{\F}G$-submodule of $\overline{\F}^\Omega$, and let $m=\dim_{\overline{\F}}U$. We say that $U$ can be realized over $\F_{p^s}$ if $U\cap\F_{p^s}^\Omega$ has dimension $m$, i.e., $U$ has a basis that is contained in $\F_{p^s}^\Omega$. By Lemma \ref{lem_FrobRealize}, we deduce that $U$ can be realized over $\F_{p^s}$ if and only if $\sigma^s(U)=U$.

\begin{lemma}\label{lem_finFieldObs}
Let $U\le\overline{\mathbb{F}}^\Omega$ be a nontrivial $[n,k]$ MDS code. If $\sigma^b(U)=U$ for some positive integer $b$, then $n\le 2p^b-2$.
\end{lemma}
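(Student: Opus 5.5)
By Lemma~\ref{lem_FrobRealize} applied with this $b$, the subspace $U':=U\cap\F_{p^b}^\Omega$ has the same dimension $k$ as $U$, and $U=U'\otimes\overline{\F}$. So $U'$ is an $[n,k]$ code over the finite field $\F_{p^b}$. The plan is to check that $U'$ is MDS over $\F_{p^b}$ and then apply Lemma~\ref{lem_basic-bound} with $q=p^b$.

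To see that $U'$ is MDS, take a generator matrix $M'$ of $U'$ with entries in $\F_{p^b}$. It is also a generator matrix of $U=U'\otimes\overline{\F}$, because a basis of $U'$ remains linearly independent over $\overline{\F}$ and spans $U$. Since $U$ is MDS, any $k$ columns of $M'$ are linearly independent over $\overline{\F}$. Hence they are linearly independent over the subfield $\F_{p^b}$ as well, since a $k\times k$ minor is nonzero regardless of the field in which we regard it. Equivalently, every nonzero codeword of $U'$ is a nonzero codeword of $U$, so it has weight at least $n-k+1$. Combined with the Singleton bound, this gives $d(U')=n-k+1$.

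Nontriviality also carries over. We have $2\le k\le n-2$, and $n$ and $k$ are unchanged, so $U'$ is a nontrivial $[n,k]_{p^b}$ MDS code. Lemma~\ref{lem_basic-bound} then gives $n\le 2p^b-2$.

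There is no real obstacle here. The only point needing care is that Lemma~\ref{lem_basic-bound} was stated for codes over a finite field, and the realization lemma is exactly what moves us from $\overline{\F}$ to the finite field $\F_{p^b}$. Note also that $\overline{\F}$ has characteristic $p$, so $\F_{p^b}\subseteq\overline{\F}$ and $\sigma$ is defined as in \eqref{eqn_sigma}.
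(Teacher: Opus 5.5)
Your proposal is correct and follows essentially the same route as the paper. Both arguments realize $U$ over $\F_{p^b}$ via Lemma~\ref{lem_FrobRealize}, use $U'\subseteq U$ together with the Singleton bound to get $d(U')=n-k+1$, and then apply Lemma~\ref{lem_basic-bound} with $q=p^b$.
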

\begin{proof}
Let $U':=U\cap\F_{p^b}^\Omega$, and write $d(U')$ for its minimum weight. By Lemma \ref{lem_FrobRealize}, it has dimension $k$ over $\F_{p^b}$, and $U=U'\otimes \overline{\F}$. By the Singleton bound, we have $d(U')\le n-k+1$. By the fact $U'\subseteq U$, we have $d(U')\ge n-k+1$. It follows that $U'$ is also an $[n,k]$ MDS code. The claim then follows by applying Lemma \ref{lem_basic-bound} to the code $U'$.
\end{proof}

A nonzero $\overline{\F}G$-submodule $U$ of $\overline{\F}^\Omega$ is \textit{irreducible} if its only submodule are 0 and $U$. The \textit{socle} of $\overline{\F}^\Omega$ is the sum of all its irreducible submodules. An irreducible submodule of $C\otimes\overline{\F}$ is an irreducible submodule of $\overline{\F}^\Omega$, so is contained in the socle of $\overline{\F}^\Omega$. If $G$ is $2$-transitive on $\Omega$, the $\overline{\F} G$-module structure of $\overline{\F}^\Omega$ has been extensively studied, cf. Burichenko \cite{Burichenko2000}, Landrock and Michler \cite{LandrockMichler1980}, Mortimer \cite{mortimer1980modular}, Hiss \cite{Hiss1990,Hiss2004}, Guralnick and Tiep \cite{GuralnickTiep2011}. In particular, Mortimer's result \cite{mortimer1980modular} provides the motivation for the research in this paper. Despite of the fact that its title is about known $2$-transitive groups, his result covers all of them modulo the classification of finite simple groups, cf. \cite{Cameron1999}.

\section{Reduction to the reducible hearts}
\label{sec:reduction}

In this and the next section, we present the proof of Theorem \ref{thm:main}. We first fix some notation that we shall use throughout the remaining part of this paper. Let $q=p^f$ with $p$ prime, and let $\F=\F_q$ be the finite field with $q$ elements. We write $\overline{\F}$ for the algebraic closure of $\F$. Let $C$ be a nontrivial $[n,k]_q$ MDS code. Let $\Omega$ be the set of $n$ coordinate positions of $C$, so that $C$ is a subspace of $\F^\Omega$. Let $G=\PAut(C)$, and assume that $G$ acts $2$-transitively on the set $\Omega$. By Proposition \ref{prop:PAutdual} and Lemma \ref{lem_duality}, we assume without loss of generality that $k\le n/2$.

\begin{lemma}\label{lem_Gfinite}
Take notation as above. Let $G_\alpha$ be the stabilizer of some $\alpha\in\Omega$ in $G$. If $(G,G_\alpha,p)$ is one of the cases in Table \ref{tab:remaining-finite}, then we have $n\le q+1$.
\end{lemma}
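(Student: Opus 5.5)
We have not seen Table~\ref{tab:remaining-finite}. Presumably it lists finitely many sporadic-type or small $2$-transitive groups: small-degree cases, sporadic groups such as $M_{11},M_{12},M_{22},M_{23},M_{24},HS,Co_3$, and small exceptional affine groups, each with specified primes $p$. Because the table is finite, the plan is a uniform case-by-case reduction driven by the bounds of Section~\ref{sec:prelim}.

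We argue by contradiction and suppose $n\ge q+2$. Corollary~\ref{cor_ResValq} then gives $\frac{n+2}{2}\le q\le n-2$. Since $n$ is fixed by the row $(G,G_\alpha,p)$ and $q$ is a power of the given $p$, only a handful of values $q=p^f$ survive. We also assume $k\le n/2$ as arranged, and by hypothesis $k\ge 4$ and $k\le q-3$. Lemma~\ref{lem_basic-bound} gives $n\le q+k-1$, so $k\ge n-q+1\ge 3$. Each row therefore leaves a short list of pairs $(q,k)$.

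Next we use the module structure. By Lemma~\ref{lem_scalar-extension}, $C\otimes\overline{\F}$ is a $k$-dimensional MDS code with the same permutation group, and it is an $\overline{\F}G$-submodule of $\overline{\F}^\Omega$. From the known submodule lattice of $\overline{\F}^\Omega$ for these $2$-transitive groups (Mortimer, and the small-group computations in the references), we list the submodules of dimension $k$ in the admissible range. Typically there are very few: $\langle\mathbf 1\rangle$, the augmentation module, the heart, and a few sums. Most values of $k$ are then excluded at once. For each remaining submodule $U$, we find the smallest $b$ with $\sigma^b(U)=U$; this is the field of realisation, determined by the Brauer character values and Lemma~\ref{lem_sigCommuG}. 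Lemma~\ref{lem_finFieldObs} then gives $n\le 2p^b-2$, which should contradict $n\ge q+2$ whenever $U$ is realised over a small field such as $\F_p$ or $\F_{p^2}$. Moreover, since $C$ itself is defined over $\F_q$, we need $b\mid f$, and this cuts the list further.

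The main obstacle will be the few surviving cases. These are submodules realised over a field large enough that the inequalities do not immediately rule out MDS-ness; natural examples are the Golay-type modules for $M_{23},M_{24}$ with $p=2$, or $M_{11},M_{12}$ with $p=3$. In these cases we plan to show directly that $U$ is not MDS by exhibiting a nonzero vector of weight less than $n-k+1$. Good sources are a known short codeword, such as an octad or dodecad in the Golay code or a fixed-point-supported vector of an element of $G$, or the observation that $U$ contains $\mathbf 1_\Delta$ for a block $\Delta$ of a $G$-invariant design. If such a vector is hard to find, the fallback is a direct computation, for example a rank check over the relevant field, in the spirit of the finite verification the table suggests. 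Either way, every row yields a contradiction, so $n\le q+1$.
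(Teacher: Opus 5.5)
There is a genuine gap. The lemma is nothing but a finite verification over the seven rows of Table~\ref{tab:remaining-finite}, and your proposal never carries that verification out. The rows are ${}^2G_2(3)\cong\mathrm{P\Gamma L}_2(8)$ on $28$ points; $M_{24},M_{23},M_{22}$, all forcing $q=16$; $M_{11}$ on $12$ points with $q=9$; $HS$ on $176$ points with $q=128$; and $Co_3$ on $276$ points with $q=243$ or $256$. There is no $M_{12}$ and no affine group. Everything after Corollary~\ref{cor_ResValq} in your text depends on ``the known submodule lattice''. You cannot simply quote that lattice from Mortimer, least of all for $HS$ on $176$ or $Co_3$ on $276$ points. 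The paper obtains exactly this information by a Magma enumeration of all $G$-invariant $\F_q$-codes with $2\le k\le n-2$, and then checks their minimum distances. For $Co_3$ only upper bounds on the distance are needed. For $HS$ the window $\min\{k,n-k\}\ge n+1-q=49$ from Lemma~\ref{lem_basic-bound} already excludes the invariant dimensions $21,22,154,155$. Without that input your text is a plan, and completing it needs the same machine computation.

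Given the lattices, your Frobenius idea is nicer than computing distances, but it does not work as you formulate it. Argue over $\F_q$. The map $\sigma$ permutes the invariant subspaces of $\F_q^\Omega$ of a fixed dimension. So the $\sigma$-orbit of $C$ has length $b$ at most their number, $C\otimes\overline{\F}$ is $\sigma^b$-stable, and Lemma~\ref{lem_finFieldObs} gives $n\le 2p^b-2$. When $C$ is the only invariant code of its dimension, this gives $n\le 2p-2$. That disposes at once of $M_{24}$, $M_{23}$ and $M_{11}$: the Golay codes are the easiest cases, not the obstacles you anticipated. It also disposes of the $Co_3$ rows, where the dimensions $126,127$, resp.\ $23$, lie inside the window, assuming one code per dimension as the paper's lists indicate. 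The rows ${}^2G_2(3)$ and $HS$ fall to the window alone.

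However, the field of realisation of a \emph{submodule} is not determined by Brauer character values when a semisimple section has a repeated constituent. For $M_{22}$ with $q=16$, the invariant $[22,11]_{16}$ codes form a pencil of $17$ codes between the shortened Golay code $[22,10,8]$ and its dual $[22,12,6]$; their quotient is two copies of the trivial module. Twelve of these codes have $\F_{16}$ as minimal field of definition, so Lemma~\ref{lem_finFieldObs} only gives $22\le 30$. There your fallback is needed in a specific form: each of these codes contains the $[22,10,8]$ code, so $d(C)\le 8<12=n-k+1$.
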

\begin{proof}
Let $(G,G_\alpha)$ be one of the cases in Table \ref{tab:remaining-finite}. Suppose to the contrary that $n>q+1$. Then by Corollary \ref{cor_ResValq}, we have $\frac{n+2}{2}\le q\le n-2$. This gives the possible values of $q=|\F|$ in the last column of the table. If $G=HS$ and $(n,q)=(176,128)$, we would have $\min\{k,n-k\}\ge n+1-q=49$ if $C$ is a nontrivial MDS code by Lemma \ref{lem_basic-bound}. We check with Magma \cite{BosmaCannonPlayoust1997} that all $G$-invariant codes over $\F_{128}$ of length $176$ and dimension in the range $[2,174]$ have dimensions $21,22,154$ or $155$, none of which satisfies $\min\{k,176-k\}\ge 49$. If $G=Co_3$ and $q=243$, the parameters of all $G$-invariant codes of length $276$ and dimension in the range $[2,274]$ are $[276,126,\le 102]_{243}$, $[276,127,\le 105]_{243}$, $[276,149,\le 84]_{243}$ and $[276,150,\le 86]_{243}$. If $G=Co_3$ and $q=256$, all $G$-invariant codes of length $276$ and dimension in the range $[2,274]$ are $[276,23,\le 132]_{256}$ and $[276,253,\le 14]_{256}$ codes. None of those codes are MDS codes.  For the other cases, we list all the $G$-invariant $[n,k,d]_q$ codes with $2\le k\le n-2$ as follows:
\begin{enumerate}
    \item $G={}^2G_2(3)$: one $[28,7,12]_{16}$ code, two $[28,8,12]_{16}$ codes and one $[28,9,10]_{16}$ code.
    \item $G=M_{24}$: one $[24,12,8]_{16}$ code.
    \item $G=M_{23}$: one $[23,11,8]_{16}$ code and one $[23,12,7]_{16}$ code.
    \item $G=M_{22}$: one $[22,10,8]_{16}$ code, one $[22,11,6]_{16}$ code, two $[22,11,7]_{16}$ code, fourteen $[22,11,8]_{16}$ codes and one $[22,12,6]_{16}$ code.
    \item $G=M_{11}$: one $[12,6,6]_9$ code.
\end{enumerate}
None of the above codes are MDS codes.  This completes the proof.
\end{proof}

\begin{table}[!htbp]
\centering
\footnotesize
\setlength{\tabcolsep}{3pt}
\renewcommand{\arraystretch}{1.4} 
\caption{The $(G,G_\alpha,p)$ tuples. Here, $P\in\mathrm{Syl}_3(\mathrm{PSL}_2(8))$ for line 1, and the possible values of $q=|\F|$ follows from Corollary \ref{cor_ResValq}.}
\label{tab:remaining-finite}
\begin{tabular}{|c|c|c|c|c|}
\hline
$G$ & $G_\alpha$ & $|\Omega|=[G:G_\alpha]$ & $p=\mathrm{char}(\F)$ & Possible $q=|\F|$  \\
\hline
$G={}^2G_2(3)\cong\mathrm{P\Gamma L}_2(8)$ & $G_\alpha=N_G(P)$ & $28$ & $2$ & $16$  \\
\hline
$G=M_{24}$ & $G_\alpha=M_{23}$ & $24$ & $2$& $16$  \\
\hline
$G=M_{23}$ & $G_\alpha=M_{22}$ & $23$ & $2$& $16$  \\
\hline
$G=M_{22}$ & $G_\alpha=\mathrm{PSL}_3(4)$ & $22$ & $2$& $16$ \\
\hline
$G=M_{11}$ & $G_\alpha=\mathrm{PSL}_2(11)$ & $12$ & $3$& $9$\\
\hline
$G=HS$ & $G_\alpha=\mathrm{PSU}_3(5){:}2$ & $176$ & $2$ & $128$\\
\hline
$G=Co_3$ & $G_\alpha=\mathrm{McL}{:}2$ & $276$ & $3$ & $243$\\
       &                               &       & $2$ & $256$\\

\hline
\end{tabular}
\end{table}

By Lemma~\ref{lem_scalar-extension}, the code $C\otimes\overline{\F}$ is also an MDS code. By abuse of notation, we assume that $\F$ is algebraically closed and write $C$ for $C\otimes \overline{\F}$ from now on. Let $M=\overline{\F}^{\Omega}$, so that $C$ is a $\overline{\F}G$-submodule of $M$. We define two subspaces of $M$ as follows:
\begin{equation*}\label{eqn_defJA}
J=\overline{\F}\mathbf{1},\qquad
A=\left\{(a_\omega)_{\omega\in\Omega}:\sum_{\omega\in\Omega}a_\omega=0\right\}.
\end{equation*}
We have $A=J^\perp$, $A\cap J=0$ if $p\nmid n$ and $J\le A$ if $p\mid n$. Both $J$ and $A$ are invariant under the action of $\sym(\Omega)$, so are $\F G$-submodules of $M$. The \textit{heart} of $M$ is the quotient module
\[
  \overline A=A/(A\cap J).
\]
By Lemma~2 of \cite{mortimer1980modular}, we have the following result.
\begin{lemma}\label{lem_standard-subspaces}
Take notation as above. If the heart $\overline A$ is a simple $\F G$-module, then the only $G$-submodules of $M$ are $0$, $J$, $A$ and $M$.
\end{lemma}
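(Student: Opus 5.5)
The plan is to analyse an arbitrary nonzero $\F G$-submodule $U$ of $M$ by splitting into two cases, according to whether $U$ is contained in $A$ or not. Throughout, the heart $\overline A=A/(A\cap J)$ is assumed simple. We use two elementary facts, both consequences of $2$-transitivity (indeed of transitivity).

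The first fact concerns fixed vectors. Since $G$ is transitive on $\Omega$, the only $G$-fixed vectors in $M$ are the scalar multiples of $\mathbf 1$. Hence $J$ is the unique $1$-dimensional trivial submodule.

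The second fact is that the submodule generated by a single non-constant vector contains nonzero elements of $A$. Take $u\in U$ not in $J$. Then $u_\alpha\ne u_\beta$ for some $\alpha,\beta$, so $u-u^g$ is a nonzero vector for a suitable $g$. Moreover $u-u^g$ lies in $A$, because permuting coordinates does not change the coordinate sum.

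\textbf{Case $U\le A$.} Consider the image of $U$ in $\overline A$. By simplicity, either $U\le A\cap J$ or $U+(A\cap J)=A$.

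In the first subcase, $U$ is $0$ or $J$, the latter only when $p\mid n$.

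In the second subcase, if $p\nmid n$ then $A\cap J=0$ and so $U=A$. If $p\mid n$, then $A\cap J=J$, which is $1$-dimensional, so either $U=A$ or $U$ is a complement to $J$ in $A$. We must exclude such a complement. Suppose $A=U\oplus J$ as $\F G$-modules. Then $J$ would be a direct summand of $A$. Since $p\mid n$ we have $\mathbf 1\in A=J^\perp$, so $\la \mathbf1,\mathbf1\ra=0$. I would derive a contradiction as follows. Let $e$ be the $G$-equivariant projection of $A$ onto $J$ along $U$. The coordinate map $x\mapsto x_\alpha-x_\beta$ is not $G$-invariant, so I would instead use the $G$-invariant bilinear form on $M$ together with the identification $M/A\cong J^*$. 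A complement $U$ with $U\oplus J=A$ gives, via $U^\perp$, a submodule $U^\perp\supseteq J$ with $\dim U^\perp=2$. Then $U^\perp/J$ is a $1$-dimensional module, which is trivial by the homomorphism structure (a $2$-transitive group has a permutation character with only one trivial constituent on the relevant quotient). This produces a $2$-dimensional submodule of $M$ containing $J$ on which $G$ acts unipotently. Such a module is spanned by $\mathbf1$ and a vector $v$ with $v^g-v\in J$ for all $g$. The map $g\mapsto$ the scalar $c(g)$ defined by $v^g-v=c(g)\mathbf 1$ is a homomorphism $G\to(\F,+)$. Comparing coordinates, $v_{\omega^{g^{-1}}}-v_\omega=c(g)$ for all $\omega$. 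A point stabilizer $G_\alpha$ fixes $\alpha$, so $c$ vanishes on $G_\alpha$. Since $G_\alpha$ is transitive on $\Omega\setminus\{\alpha\}$, $v$ is constant off $\alpha$. Finally, using some $g\in G_\beta$ with $\beta\ne\alpha$ that moves $\alpha$, we get $v$ constant, i.e. $v\in J$, a contradiction.

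\textbf{Case $U\not\le A$.} Then $U$ contains non-constant vectors, since $J\not\le A$ when $p\nmid n$, and when $p\mid n$, $J\le A$ anyway. By the second fact, $U\cap A$ is a nonzero submodule of $A$. The first case then gives $U\cap A\in\{J,A\}$ (excluding $J$ requires a non-constant element of $U\cap A$, which the fact supplies whenever $\overline A\ne0$).

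If $U\cap A=A$, then $U$ strictly contains $A$, which has codimension $1$, so $U=M$.

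The remaining possibility is that $U\cap A$ is still only $J$-sized. This cannot happen, because $u-u^g$ is non-constant for suitable $g$: apply the same stabilizer argument as above to a non-constant $u$.

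The main obstacle is ruling out a $G$-complement to $J$ inside $A$ when $p\mid n$, which is handled by the cocycle/stabilizer argument above.
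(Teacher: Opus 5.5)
The paper does not prove this lemma; it quotes Lemma~2 of Mortimer. Your argument is a self-contained replacement, and it is correct in substance.

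The only real content is that, when $p\mid n$, the submodule $J$ has no $G$-complement in $A$. Your stabilizer argument is the right tool for this: a vector $v$ with $v^g-v\in J$ for all $g$ is fixed by every point stabilizer $G_\alpha$, hence is constant by $2$-transitivity and $n\ge 3$. In module language this says $(M/J)^G=0$. By the self-duality of $M$ this is equivalent to $A$ having no trivial quotient, and a complement $U$ would give exactly such a quotient. Your route shows where $2$-transitivity enters; the citation buys only brevity.

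Three spots need tightening.

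First, your reason for $U^\perp/J$ being trivial does not work. Counting trivial constituents cannot identify a $1$-dimensional subquotient. For example, for $S_3$ on three points with $p=3$, the heart is the sign character, so $M/J$ has both the sign and the trivial character as composition factors. The correct one-line reason is $U^\perp\cap A=(U+J)^\perp=A^\perp=J$. Hence $U^\perp/J$ embeds in $M/A$, on which $G$ acts trivially because the coordinate sum is $G$-invariant.

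You can also avoid $U^\perp$ altogether. Let $\pi\colon A\to J$ be the equivariant projection along $U$, and write $\pi(e_\alpha-e_\beta)=t_{\alpha\beta}\mathbf 1$. By $2$-transitivity $t_{\alpha\beta}=t$ is constant on ordered pairs. Then $e_\alpha-e_\gamma=(e_\alpha-e_\beta)+(e_\beta-e_\gamma)$ gives $t=2t$, so $t=0$. Thus $\pi=0$, contradicting $\pi(\mathbf 1)=\mathbf 1$.

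Second, in the case $U\not\le A$, your claim that $U$ contains a non-constant vector is false for $U=J$ when $p\nmid n$. You should first dispose of $U\le J$, and only then pick a non-constant $u\in U$.

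Third, your ``second fact'' as stated only gives $u-u^g\ne 0$, not that $u-u^g$ is non-constant. Your final paragraph correctly supplies non-constancy via the stabilizer argument. So $U\cap A\not\le J$, hence $U\cap A=A$ by the first case, and $U=M$ as you conclude.
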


The \textit{socle} of $G$, denoted by $\mathrm{soc}(G)$, is the subgroup generated by all the minimal normal subgroups of $G$. By Burnside's theorem \cite[Theorem IX on p. 214]{Burnside1911}, a finite $2$-transitive group is either affine or almost simple, cf. \cite[Table~7.4]{Cameron1999} and \cite[Section~7.7]{DixonMortimer1996}. In the almost simple case, there is a nonabelian simple group $S$ such that $\mathrm{soc}(G)=S$  and $S\le G\le\operatorname{Aut}(S)$. In the affine case, $\mathrm{soc}(G)\cong(\F_\ell^a,+)$ is an elementary abelian $\ell$-group with $\ell$ prime, and it acts regularly on $\Omega$. We regard it as an vector space over $\F_{\ell}$, and denote it by $V$. We identify $\Omega$ with $V$ via $\alpha^v\mapsto v$, where $\alpha$ is a fixed point of $\Omega$ and $v$ ranges in $V$. We then have $G=V\rtimes G_0$, where $G_0\le\mathrm{GL}(V)$ is the stabilizer of $0$ in $G$. The group $G_0$ is transitive on $V\setminus\{0\}$ by the $2$-transitivity assumption.
\begin{proposition}\label{prop:affine-exclusion}
Let $C$ be a nontrivial $[n,k]_q$ MDS code. If $G=\mathrm{PAut}(C)$ is a $2$-transitive subgroup of $\sym(\Omega)$ of affine type, then we have $n\le q$.
\end{proposition}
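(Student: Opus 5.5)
The plan is to split according to whether the prime $\ell$ of the socle $V\cong(\F_\ell^a,+)$ equals the characteristic $p$. Put $n=|\Omega|=|V|=\ell^a$. When $\ell\ne p$ I will show that $G$ has no nontrivial invariant subspaces at all. When $\ell=p$ the bound follows from arithmetic of powers of $p$.

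\emph{Case $\ell\ne p$.} By Lemma~\ref{lem_scalar-extension}, $C\otimes\overline{\F}$ is a nontrivial MDS code of dimension $k$ with $2\le k\le n-2$, and it is invariant under $G$. Since $p\nmid |V|$ and $\overline{\F}$ contains the $\ell$-th roots of unity, the $\overline{\F}V$-module $M=\overline{\F}^{V}$ (regular action) decomposes as a direct sum of $n$ pairwise distinct one-dimensional eigenspaces $M_\chi$, one for each character $\chi\in\widehat V=\mathrm{Hom}(V,\overline{\F}^\times)$. Here $M_\chi$ is spanned by $(\chi(v))_{v\in V}$ and $M_{1}=J$.

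Since the $\chi$ are distinct, every $V$-submodule of $M$ is a sum of some of the $M_\chi$. For $g\in G_0$ we have $M_\chi^g=M_{\chi^g}$, so a $G$-submodule corresponds to a $G_0$-invariant subset of $\widehat V$. Next, $G_0$ is transitive on $V\setminus\{0\}$, so it has two orbits on $V$. The permutation characters of $G_0$ on $V$ and on $\widehat V\cong\mathrm{Hom}(V,\mu_\ell)$ coincide, by Brauer's permutation lemma (each $g$ fixes as many vectors as linear functionals). Hence $G_0$ also has exactly two orbits on $\widehat V$, namely $\{1\}$ and $\widehat V\setminus\{1\}$.

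Therefore the only $G$-submodules of $M$ are $0$, $J$, $\bigoplus_{\chi\ne1}M_\chi=A$ and $M$. Their dimensions $0,1,n-1,n$ all lie outside $[2,n-2]$, which contradicts the nontriviality of $C\otimes\overline{\F}$. So this case cannot occur.

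\emph{Case $\ell=p$.} Now $n=p^a$ and $q=p^f$ are both powers of $p$. Suppose that $n\ge q+1$. Since $p$ divides both $n$ and $q$, it cannot divide $n-q=1$, so $n\ne q+1$ and hence $n>q+1$. Corollary~\ref{cor_ResValq} then gives $\frac{n+2}{2}\le q\le n-2$. But $q<n$ with both powers of $p$ forces $q\le n/p\le n/2<\frac{n+2}{2}$, a contradiction. Hence $n\le q$.

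The only step needing care is the irreducibility of $A$ in the coprime case, i.e.\ the transfer of transitivity from $V\setminus\{0\}$ to the nontrivial characters. I will do this by the fixed-point count: the characters fixed by $g$ are the characters of $V/[V,g]$, whose number is $|C_V(g)|$. The remaining steps are formal.
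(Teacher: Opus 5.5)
Your proof is correct, but it gets $p=\ell$ by a different route from the paper.

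The paper argues as follows. Since $C\notin\{0,J,A,M\}$, Mortimer's Lemma~2 (Lemma~\ref{lem_standard-subspaces}) shows that the heart $\overline{A}$ is reducible. It then reads off $p=\ell$ from line~4 of Mortimer's Table~1.

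You instead prove directly that for $\ell\ne p$ the only $G$-submodules of $\overline{\F}^V$ are $0,J,A,M$. You decompose the module into the $n$ distinct $V$-eigenlines indexed by $\widehat V$. You then transfer transitivity of $G_0$ from $V\setminus\{0\}$ to $\widehat V\setminus\{1\}$ using the fixed-point count $|C_V(g)|=|V/[V,g]|$: the two permutation characters agree, so the orbit counts agree.

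This is exactly the coprime-characteristic part of Mortimer's result, specialised to affine groups. Your version makes the affine case self-contained instead of relying on Mortimer's table. It also shows that in this case only the nontriviality of $C$ is used, not the MDS property. The paper's version is shorter because it delegates this step to Mortimer.

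Your $\ell=p$ case is the same arithmetic as the paper's: Ball's bound $n\le 2q-2$, packaged as Corollary~\ref{cor_ResValq}. The detour through $n\ne q+1$ is harmless but unnecessary. If $q<n$ with both powers of $p$, then $n\ge pq\ge 2q>2q-2$ directly.
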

\begin{proof}
Let $\ell^a$ be the order of $\mathrm{soc}(G)$, where $\ell$ is a prime. We have $C\not\in\{0,J,A,M\}$ by the assumption $\min\{k,n-k\}\ge 2$. By Lemma~\ref{lem_standard-subspaces}, the heart $\overline{A}$ is reducible. By line 4 of \cite[Table~1]{mortimer1980modular}, we deduce that $p=\ell$. Hence, $n=p^a$ and $q=p^f$. Suppose to the contrary that $n\ge q+1$, i.e., $a>f$. By Lemma~\ref{lem_basic-bound}, we have $n\le 2q-2$. It follows that $p^a=n\le2p^f-2<p^{f+1}$, i.e., $a<f+1$: a contradiction. Therefore, we have $n\le q$ as desired.
\end{proof}

\begin{proposition}\label{prop:mortimer-reduction}
Take notation as above, and assume that $G$ is $2$-transitive on the set $\Omega$. If $n=|\Omega|>q+1$, then $(G,\Omega,q)$  occur in Table~\ref{tab:remaining-families}.
\end{proposition}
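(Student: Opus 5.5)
Assume $n>q+1$. By Corollary~\ref{cor_ResValq} we then have $\frac{n+2}{2}\le q\le n-2$, and after the earlier reduction $2\le k\le n/2$. Since $C\notin\{0,J,A,M\}$, Lemma~\ref{lem_standard-subspaces} says the heart $\overline A$ is a reducible $\overline{\F}G$-module. Proposition~\ref{prop:affine-exclusion} rules out the affine case, because there $n\le q$. So $G$ is almost simple with socle $S$. The plan is to run through the classification of almost simple $2$-transitive groups (Cameron's Table~7.4) and use Mortimer's Table~1 in \cite{mortimer1980modular}, supplemented by \cite{GuralnickTiep2011}. Together these list exactly the pairs $(G,\Omega)$ and characteristics $p$ for which the heart is reducible. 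All other pairs are discarded immediately.

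For each surviving pair $(G,\Omega,p)$, I would impose the arithmetic constraint on $q=p^f$, namely that some power of $p$ lies in $[\frac{n+2}{2},n-2]$, and then look for a sharper bound.

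\begin{itemize}
\item For the sporadic and small exceptional cases, the resulting finite list is exactly Table~\ref{tab:remaining-finite}, and Lemma~\ref{lem_Gfinite} excludes it.
\item For the infinite families, namely $\mathrm{PSL}_d(r)$ on points or hyperplanes, $\Sp_{2m}(2)$ on the $2^{2m-1}\pm2^{m-1}$ quadratic forms, $\mathrm{PSU}_3(r)$, ${}^2B_2(r)$ and ${}^2G_2(r)$ acting on their Steinberg-type sets, $\mathrm{PSL}_2(r)$ on $r+1$ points, $A_n$ and $S_n$, and so on, Mortimer and Guralnick--Tiep specify the prime $p$ for which the heart is reducible. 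Typically $p$ equals the defining characteristic, or $p\mid n$ or $p\mid n-1$, or $p=2$ for the symplectic groups.
\item For $A_n$ and $S_n$ with $n\ge5$, the heart is simple apart from small exceptions, so these drop out or join the finite list.
\end{itemize}

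To refine the range of $q$ in the infinite families, I would use Lemmas~\ref{lem_sigCommuG} and~\ref{lem_finFieldObs}. The socle of $M$ is known explicitly: the $\overline{\F}G$-submodules of $M$ form a small lattice, often generated by Frobenius twists of a few modules, for example $\mathrm{PSL}_d(r)$ acting on $\PG(d-1,r)$ with $r=p^e$. Then $\sigma$ permutes the irreducible submodules, and $C$ is a sum of some of them, so $\sigma^b(C)=C$ for small $b$. A natural candidate is $b=e$, or the order of the permutation that $\sigma$ induces on the composition factors. Lemma~\ref{lem_finFieldObs} then gives $n\le 2p^b-2$. This either contradicts the inequality $q\le n-2$ combined with the size of $n$, or pins $q$ to a specific narrow interval in terms of $r$. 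The triples that survive are recorded as Table~\ref{tab:remaining-families}.

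The main obstacle is the bookkeeping for the families. One needs the precise list from \cite{mortimer1980modular} and \cite{GuralnickTiep2011} of which pairs $(G,p)$ have a reducible heart, together with enough information about how Frobenius acts on the submodule lattice to bound $b$. I would first check $\mathrm{PSL}_d(r)$ on points, where the submodule lattice is completely described by Bardoe and Sin. I would then handle the rank-one Lie-type groups and $\Sp_{2m}(2)$ by the same Frobenius-orbit argument, and leave in the table every case where the argument does not close.
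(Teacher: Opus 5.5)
Your skeleton matches the paper's: rule out the affine case by Proposition~\ref{prop:affine-exclusion}, get a reducible heart from Lemma~\ref{lem_standard-subspaces}, read the candidates off Mortimer's Table~1 (with Guralnick--Tiep removing the open Ree possibility $p\mid r-\sqrt{3r}+1$), and dispose of the finite leftovers with Lemma~\ref{lem_Gfinite}.

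\textbf{The gap.} Your claim that for the sporadic and small exceptional cases ``the resulting finite list is exactly Table~\ref{tab:remaining-finite}'' is false, and it leaves a case unproved.

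The exceptional $2$-transitive action of $\mathrm{PSL}_2(11)$ on $11$ points has reducible heart for $p=3$. It stabilizes the ternary Golay code: it is the point stabilizer in $M_{11}$ acting on $12$ points. Your arithmetic filter gives $6.5\le q\le 9$, so $q=9$ survives with $n=11>q+1$.

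This triple is not in Table~\ref{tab:remaining-finite}, so Lemma~\ref{lem_Gfinite} does not apply. It is not a row of Table~\ref{tab:remaining-families} either, since the $\mathrm{PSL}_2(r)$ row there is the action on $\mathrm{PG}(1,r)$ with $n=r+1$ and $p=2$. So your argument neither excludes it nor places it in the target table.

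The paper removes it separately. Since $n=11$, we have $\min\{k,n-k\}\le 5$, and the MDS conjecture is known in that range (Theorems~61 and~63 of Ball--Lavrauw). You could also close it with your own Frobenius tool. Here $M=J\oplus A$, and $A$ has two non-isomorphic $5$-dimensional composition factors, one of which is the $\F_3$-rational dual Golay code. Hence every invariant subspace is $\sigma$-stable, and Lemma~\ref{lem_finFieldObs} gives $11\le 4$.

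\textbf{The logic of the table.} Your closing sentences, ``the triples that survive are recorded as Table~\ref{tab:remaining-families}'' and ``leave in the table every case where the argument does not close'', invert the logic. That table is fixed, and you must show every surviving triple lies in it. A case your tools fail to close but which is not a row of that table breaks the proposition rather than enlarging the table; $\mathrm{PSL}_2(11)$ is exactly such a case.

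For the same reason, the Frobenius refinement of $q$ for the infinite families is unnecessary here. Table~\ref{tab:remaining-families} simply transcribes the family rows of Mortimer's table with their conditions on $p$. The paper uses the Frobenius arguments only afterwards, in Section~\ref{sec:almost-simple}, to eliminate those rows.

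\textbf{Bookkeeping you skip.} Your ``and so on'' omits the small-isomorphism bookkeeping the proof actually needs:
\begin{itemize}
\item $\mathrm{PSL}_2(3)\cong A_4$ is affine and already excluded.
\item $\mathrm{Sp}_4(2)'\cong\mathrm{PSL}_2(9)$ on $10$ points falls under the $\mathrm{PSL}_2(r)$ row, since the $\mathrm{Sp}$ row requires $m\ge 3$.
\item ${}^2G_2(3)'\cong\mathrm{PSL}_2(8)$ leads to $\mathrm{P\Gamma L}_2(8)$ on $28$ points, which is in Table~\ref{tab:remaining-finite}.
\end{itemize}
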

\begin{proof}
Assume that $n>q+1$. By Proposition \ref{prop:affine-exclusion}, $G$ is an almost simple group, so the case $G=A_4\cong\mathrm{AGL}_1(4)$ in the line 3 of  \cite[Table~1]{mortimer1980modular} is excluded. Let $S=\mathrm{soc}(G)$, which is a nonabelian simple group. By Lemma \ref{lem_standard-subspaces}, the heart $\overline{A}=A/(A\cap J)$ is reducible, so it must be one of the cases in \cite[Table~1]{mortimer1980modular}. By Theorems 61 and 63 of \cite{BallLavrauw2020}, we know that the MDS conjecture holds if $\min\{k,n-k\}\le 5$ and so the case $G=\mathrm{PSL}_2(11)$ in the third row from the bottom of \cite[Table~1]{mortimer1980modular} is ruled out. If $S={}^2G_2(3)'$, then $S\cong\mathrm{PSL}_2(8)$ and $G=\mathrm{P\Gamma L}_2(8)$ by \cite[Table~1]{mortimer1980modular}. This case has been excluded by Lemma \ref{lem_Gfinite}. If $S=\mathrm{Sp}_4(2)'$, then $S\cong\mathrm{PSL}_2(9)$, and it has a unique $2$-transitive action on $10$ elements. Hence this case is covered by the action of $\mathrm{PSL}_2(9)$ on the projective points of $\mathrm{PG}(1,9)$. If $S={}^2G_2(r)$ with $r=3^{2a+1}\ge 27$, the possibility that $p\mid r-\sqrt{3r}+1$ left open in \cite[Table~1]{mortimer1980modular} has been eliminated in \cite[Section 4.3]{GuralnickTiep2011}. We have excluded the cases in Table \ref{tab:remaining-finite} by Lemma \ref{lem_Gfinite}, and the remaining cases in \cite[Table~1]{mortimer1980modular} are recorded in Table \ref{tab:remaining-families}. This completes the proof.
\end{proof}

\begin{table}[!htbp]
\centering
\small
\setlength{\tabcolsep}{4pt}
\renewcommand{\arraystretch}{1.4} 
\caption{The candidate $2$-transitive actions, where $p=\mathrm{char}(\F)$ and $a\ge 1$}
\label{tab:remaining-families}
\begin{tabular}{|c|c|c|c|}
\hline
$S=\mathrm{soc}(G)$ &  $\Omega$ & $n=|\Omega|$ &Conditions \\
\hline
$\mathrm{PSL}_d(r)$ &points of $\mathrm{PG}(d-1,r)$ & $(r^d-1)/(r-1)$ & $d\ge3$, $r=p^s$\\
\hline
$\mathrm{Sp}_{2m}(2)$ &quadratic forms of type $\varepsilon$& $2^{m-1}(2^m+\varepsilon 1)$ & $p=2$, $m\ge3$, $\varepsilon=\pm$  \\
\hline
$\mathrm{PSL}_2(r)$ & points of $\mathrm{PG}(1,r)$ & $r+1$ & $p=2$, $r\ge 5$ odd \\
\hline
$\mathrm{Sz}(r)$ &  Suzuki-Tits ovoid & $r^2+1$ & $r=2^{2a+1}$, $p\mid r+\sqrt{2r}+1$ \\
\hline
$\mathrm{PSU}_3(r)$& points of the classical unital & $r^3+1$& $r\ge 3$, $p\mid r+1$ \\
\hline
${}^2G_2(r)$& Ree--Tits ovoid & $r^3+1$ & $r=3^{2a+1}$,   $p\mid 2(r+1)(r+\sqrt{3r}+1)$ \\
\hline
\end{tabular}
\end{table}

\section{Almost simple actions}
\label{sec:almost-simple}

Our main result in this section is the following result.
\begin{theorem}\label{thm:AS}
Let $C$ be a nontrivial $[n,k]_q$ MDS code such that $C\le\F_q^\Omega$, and assume that $G=\PAut(C)$ is a $2$-transitive subgroup of $\sym(\Omega)$ of almost simple type in Table \ref{tab:remaining-families}. Then $q$ is a power of $4$, $\mathrm{PSL}_2(5)\unlhd G$, and $C$ is a $[6,3,4]_q$ code.
\end{theorem}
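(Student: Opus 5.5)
The plan is to go through the rows of Table~\ref{tab:remaining-families} one at a time. In each row I would use the known submodule structure of the permutation module $M=\overline{\F}^\Omega$ in the relevant characteristic, taken from Mortimer \cite{mortimer1980modular}, Guralnick--Tiep \cite{GuralnickTiep2011}, Burichenko \cite{Burichenko2000} and Hiss \cite{Hiss1990,Hiss2004}. From that structure I would read off the short list of possible dimensions $k$ of $G$-submodules $C$ with $2\le k\le n-2$. Each candidate would then be tested against three constraints.

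\textbf{Constraint 1: field of realization.} A submodule $U$ with $\sigma^b(U)=U$ forces $n\le 2p^b-2$ by Lemma~\ref{lem_finFieldObs}. The Frobenius $\sigma$ permutes the submodules of each fixed dimension (Lemma~\ref{lem_sigCommuG}), and the composition factors of the heart come in small Galois orbits. So $C$ is realized over a field $\F_{p^b}$ with $p^b$ small compared with $n$, typically $p^b\le r$ or $p^b\le r^2$. For the rows $\mathrm{PSL}_d(r)$ with $d\ge3$, $\mathrm{Sz}(r)$, $\mathrm{PSU}_3(r)$ and ${}^2G_2(r)$, I expect this alone to give a contradiction, because $n$ is roughly $r^{d-1}$, $r^2$ or $r^3$.

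\textbf{Constraint 2: dimension bound.} Lemma~\ref{lem_basic-bound} gives $\min\{k,n-k\}\ge n+1-q\ge 2$. Combined with Corollary~\ref{cor_ResValq} ($q\le n-2$ and $q\ge (n+2)/2$), this rules out submodules of small dimension whenever the field they can be realized over is too small.

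\textbf{Constraint 3: minimum weight.} An MDS code has $d=n-k+1$. I would exhibit explicit codewords of small weight. One source is characteristic functions $\mathbf 1_\Delta$ of geometric substructures lying in $C$ or $C^\perp$: lines or hyperplanes of $\PG(d-1,r)$, blocks of the unital, circles of the ovoid. Another is differences $\mathbf 1_{\Delta}-\mathbf 1_{\Delta'}$ of two such blocks sharing many points.

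\textbf{The hard rows.} For $\mathrm{PSL}_d(r)$ with $p\mid r$, the submodules are the generalized Reed--Muller (Hamada) codes, all defined over $\F_p$. Lemma~\ref{lem_finFieldObs} with $b=1$ then gives $n\le 2p-2$, which contradicts $n=(r^d-1)/(r-1)\ge p^2+p+1$. The row $\mathrm{Sp}_{2m}(2)$ is similar: the relevant submodules of the $2$-modular permutation module on quadratic forms are defined over $\F_2$, giving $n\le 2$, absurd.

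The remaining row $\mathrm{PSL}_2(r)$ with $p=2$ and $r$ odd is the main obstacle. Here the heart has two composition factors of dimension $(r-1)/2$, conjugate under a Galois automorphism and realized over $\F_4$ (or over $\F_2$ when $r\equiv\pm1\pmod 8$). So the candidate codes are the quadratic-residue-type codes of dimension $(r\pm1)/2$, realizable over $\F_4$. Lemma~\ref{lem_finFieldObs} with $p^b=4$ gives $n\le 6$, so $r=5$, $n=6$, $k=3$. That is the hexacode, which needs $\F_4\subseteq\F_q$, i.e.\ $q$ a power of $4$; and $\mathrm{PSL}_2(5)\unlhd G$. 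The delicate point is justifying that every submodule of dimension between $2$ and $n-2$ is $\sigma^2$-stable. I would first try to get this from the uniseriality of, or explicit submodule lattice for, the $2$-modular permutation module of $\mathrm{PSL}_2(r)$ on the projective line given in the cited references. Where that is not enough, I would fall back on a direct weight argument using the cosets of the dihedral subgroups.
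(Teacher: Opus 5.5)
\textbf{There is a genuine gap in the row $S=\mathrm{PSL}_d(r)$, $d\ge3$.} Your treatment of $\mathrm{PSL}_2(r)$, $\mathrm{Sz}(r)$, $\mathrm{PSU}_3(r)$ and ${}^2G_2(r)$ follows the paper: you get $\sigma^b(C)=C$ for small $b$ from uniseriality or from a decomposition into pairwise nonisomorphic summands, then apply Lemma~\ref{lem_finFieldObs}. For $\mathrm{PSL}_d(r)$, however, it is not true that all submodules of $M$ are defined over $\F_p$, and no field-of-definition argument can work there.

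Take $\PG(2,4)$ with $p=2$, and write functions on points as polynomials of degree divisible by $3$. Let $X_9$ be the span of the nine functions $x_ix_j^2$ (this is $D\cap A$), and put $m=x_0x_1x_2$. Expanding a product of three linear forms gives $g(m)\in c_g\,m+X_9$, where $c_g$ is a power of $\det(g)$. So $X_9+\langle m\rangle$ is a $\mathrm{GL}_3(4)$-submodule, but $\sigma$ sends it to $X_9+\langle m^2\rangle\neq X_9+\langle m\rangle$. (In general the Bardoe--Sin submodules are only $\F_r$-rational.)

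Worse, $G$ need only contain $\mathrm{PSL}_3(4)$. For $g\in\mathrm{SL}_3(4)$, both $g(m)-m$ and $g(m^2)-m^2$ lie in $X_9$. Hence $X_9+\langle\alpha m+\beta m^2\rangle$ is an $\overline{\F}S$-submodule for every $(\alpha:\beta)\in\mathbb{P}^1(\overline{\F})$. Such a module is defined over $\F_q$ as soon as $\F_4(\alpha/\beta)\subseteq\F_q$, so Lemma~\ref{lem_finFieldObs} gives no bound on $n$ independent of $q$. The principle behind your Constraint~1 (small Galois orbits of composition factors force a small field of realization) breaks down once a composition factor repeats. That is exactly what happens here: the characters $\det$ and $\det^2$ of $\mathrm{PGL}_3(4)$ both become trivial on $\mathrm{PSL}_3(4)$.

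\textbf{What this row needs.} The missing ingredient is the socle statement the paper takes from \cite{BardoeSin2000} via \cite{DevillersEtAl2025}: $D\cap A$ is the unique minimal submodule of $A$. Since $M=J\oplus A$ and $\dim C\ge 2$, we have $C\cap A\neq 0$, hence $C\supseteq D\cap A$. This contains $\mathbf 1_{\PG(H_2)}-\mathbf 1_{\PG(H_1)}$, a codeword of weight $2r^{d-2}$. With $k\le n/2$ this contradicts $d(C)=n-k+1$. Your Constraint~3 names exactly these codewords, but without the socle fact nothing places them in $C$.

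\textbf{The row $\Sp_{2m}(2)$.} The same objection applies in weaker form. Your claim that every submodule is $\F_2$-rational is asserted without support. \cite[Proposition~8.4]{SastrySin2002} gives only the first two socle layers. Multiplicity-freeness would give $\sigma$-stability, but $M$ is not multiplicity-free: the trivial module occurs both as $J$ and as $M/A$. The paper avoids needing the full lattice. It uses those two layers to get $B^\varepsilon\le C\cap C^\perp$, then finds a vector of weight $<n/2$ among $\lambda_w+c\mathbf 1$ and $\lambda_w+(c+1)\mathbf 1$, contradicting $d(C)+d(C^\perp)=n+2$.

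\textbf{Constraint~2.} This relies on Corollary~\ref{cor_ResValq}, i.e.\ on $n>q+1$, which is not a hypothesis of Theorem~\ref{thm:AS}.
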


In view of Proposition \ref{prop:mortimer-reduction}, Theorem \ref{thm:AS} would complete the proof of Theorem \ref{thm:main}. We take the same notation as introduced in the beginning of Section \ref{sec:reduction}, and further assume that $G=\PAut(C)$ is a 2-transitive permutation group of $\Omega$ that appears in Table \ref{tab:remaining-families}. Let $S=\mathrm{soc}(G)$, which is a nonabelian simple group. We have $S\le G\le\Aut(S)$.

\begin{lemma}\label{lem_uniser}
Suppose that the $\overline{\F} S$-submodules of $M$ form a chain under inclusion. Then $|\Omega|=n\le 2p-2$, where $p$ is the characteristic of $\F$.
\end{lemma}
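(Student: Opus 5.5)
The plan is to show that $C$ (now the $\overline{\F}$-span, an MDS code over $\overline{\F}$) is fixed by $\sigma$, and then apply Lemma~\ref{lem_finFieldObs} with $b=1$. This gives $n\le 2p-2$ directly. Here $C$ is nontrivial, with $2\le k\le n-2$ as fixed at the start of Section~\ref{sec:reduction}.

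First, since $S\le G=\PAut(C)$, the space $C$ is an $\overline{\F}S$-submodule of $M$. By Lemma~\ref{lem_sigCommuG}, applied to the group $S\le\sym(\Omega)$, the image $\sigma(C)$ is also an $\overline{\F}S$-submodule of $M$, and $\dim\sigma(C)=\dim C=k$. The chain hypothesis says that any two $\overline{\F}S$-submodules of $M$ are comparable. So either $\sigma(C)\subseteq C$ or $C\subseteq\sigma(C)$, and equality of dimensions forces $\sigma(C)=C$. This is where uniseriality is used: in a chain, a submodule is determined by its dimension.

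Next, with $\sigma^1(C)=C$, Lemma~\ref{lem_FrobRealize} shows that $C$ is realized over the prime field $\F_p$. Lemma~\ref{lem_finFieldObs} with $b=1$ then gives $n\le 2p-2$, because $C$ is a nontrivial MDS code over $\overline{\F}$ (Lemma~\ref{lem_scalar-extension}).

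I expect no real obstacle. The one point to check is that Lemma~\ref{lem_sigCommuG} applies to $S$ rather than $G$. It does, since that lemma is stated for an arbitrary subgroup of $\sym(\Omega)$, and $\sigma$ commutes with every coordinate permutation.
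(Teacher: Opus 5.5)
Your proposal is correct and follows the paper's proof exactly. Lemma~\ref{lem_sigCommuG} shows that $\sigma(C)$ is an $\overline{\F}S$-submodule of the same dimension, the chain hypothesis then forces $\sigma(C)=C$, and Lemma~\ref{lem_finFieldObs} with $b=1$ gives $n\le 2p-2$.
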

\begin{proof}
Let $\sigma$ be as defined in \eqref{eqn_sigma}. By Lemma \ref{lem_sigCommuG}, $\sigma(C)$ is also an $\overline{\F} S$-submodule of the same dimension. There is at most one submodule of a given dimension by assumption, so $\sigma(C)=C$. The claim then follows from Lemma \ref{lem_finFieldObs}.
\end{proof}

\subsection{The case $S=\mathrm{PSL}_d(p^s)$, $d\ge 3$}
\label{sec:projective-linear}
Let $r=p^s$ and $d\ge3$, where  $p$ is the characteristic of $\F$ and $s$ is a positive integer. We set $W=\mathbb{F}_r^d$, and let $\Omega$ be the set of all one-dimensional $\F_r$-subspaces of $W$. We have $n = |\Omega| = \frac{r^d-1}{r-1}$, and $S=\mathrm{PSL}_d(r)$. A \textit{hyperplane} of $W$ is an $(d-1)$-dimensional subspace of $W$. For a hyperplane $H$, let $\mathrm{PG}(H)$ be the set of $1$-dimensional subspaces contained in $H$. Let $D$ be the $\overline{\F} S$-submodule of $M=\overline{\F}^\Omega$ spanned by $\mathbf{1}_{\PG(H)}$ with $H$ ranging over all hyperplanes of $W$, which is referred to as \textit{the hyperplane design module} in the literature. We have $J\le D$, since each element of $\Omega$ is contained in $n_0=\frac{r^{d-1}-1}{r-1}$ hyperplanes. We have $\dim(D\cap A)=\dim(D)-1$ and $M=J\oplus A$ by the fact $p\nmid n$. We refer to \cite{BardoeSin2000} for the detailed descriptions of $\overline{\F} S$-module structure of $M=\overline{\F}^\Omega$.
\begin{theorem}\label{thm:projective-linear}
Take notation as above. Then $\mathrm{soc}(G)$ can not be $\mathrm{PSL}_d(p^s)$, $d\ge 3$.
\end{theorem}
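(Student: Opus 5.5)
We argue by contradiction: $C$ is a nontrivial MDS code with $\mathrm{soc}(G)=S=\mathrm{PSL}_d(r)$, $d\ge3$, $r=p^s$, and we may assume $k\le n/2$. The idea is to use the Frobenius trick of Lemma~\ref{lem_finFieldObs}, which says that if $C$ is defined over a small field then $n$ must be small. Since $n=(r^d-1)/(r-1)>r^{d-1}$, this should force a contradiction.

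\textbf{Step 1: $C$ is defined over $\F_r$.} We use the submodule structure of $M=\overline{\F}^\Omega$ described by Bardoe and Sin \cite{BardoeSin2000}. Every $\overline{\F}S$-submodule of $M$ is the span of certain monomial-basis functions, indexed by subsets of $H$-types. In particular, each submodule is already realized over $\F_p$, or at worst over $\F_r$. Concretely, the submodules of $A$ are spanned by characteristic functions of subspaces, reduced mod $p$, together with the images of polynomial functions $W\to\F_r$ under Galois-equivariant operations. So the set of $\overline{\F}S$-submodules is finite and $\sigma$-stable. The cleanest route is to show directly that $\sigma$ fixes every submodule. Each submodule $U$ is spanned by the images of vectors in $\F_p^\Omega$, for instance by $\F_pS$-submodules of $\F_p^\Omega$ tensored up. Hence $\sigma(U)=U$.

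If that literal claim is hard to extract from \cite{BardoeSin2000}, the fallback is a counting argument. The submodule lattice of $M$ is finite (Bardoe--Sin give it explicitly as a distributive lattice), and $\sigma$ permutes the submodules of each fixed dimension. Therefore $\sigma^b(C)=C$ for some $b$ bounded by the number of submodules of that dimension. The sharper statement $\sigma^s(C)=C$ follows because all submodules are defined over $\F_r$: the Frobenius $x\mapsto x^r$ on coordinates fixes each monomial basis vector attached to functions $W\to\F_r$.

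\textbf{Step 2: apply the bound.} Once we know $\sigma^s(C)=C$, Lemma~\ref{lem_finFieldObs} gives $n\le 2p^s-2=2r-2$. But $n=(r^d-1)/(r-1)\ge r^2+r+1>2r-2$, which is a contradiction. Even $\sigma^{b}(C)=C$ with $p^b<n/2$ suffices. If only the coarser conclusion were available, we would combine it with Lemma~\ref{lem_basic-bound} and Corollary~\ref{cor_ResValq}: if $C$ is realized over $\F_{p^b}$ with $p^b\le q$, then $n\le 2p^b-2$, so we need $p^b$ small relative to $n$.

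\textbf{Main obstacle.} The crux is Step 1: justifying that every $\overline{\F}S$-submodule of $M$ is $\sigma^s$-invariant, or even $\sigma$-invariant. I would first try to quote the Bardoe--Sin theorem that submodules of $\overline{\F}^{\PG(d-1,r)}$ correspond to ideals in a poset of "$H$-types" and are spanned by explicit basis functions with values in $\F_p$. That makes them $\sigma$-stable, and Lemma~\ref{lem_uniser}-style reasoning then gives $n\le 2p-2$ immediately. Failing that, I would use the weaker fact that these modules come by extension of scalars from $\F_rS$-modules, which still yields the bound $2r-2$ and the contradiction above.
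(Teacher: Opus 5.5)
\textbf{Verdict: genuine gap.} Step 1 is false as stated, and the approach cannot be repaired by rationality arguments alone.

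\textbf{Why Step 1 fails.} You claim that every $\overline{\F}S$-submodule of $M$ is $\sigma^s$-stable, or even $\sigma$-stable.

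First, the Bardoe--Sin basis functions are monomials with values in $\F_r$, not $\F_p$. The Frobenius $\sigma$ multiplies their exponents by $p$, which cyclically shifts the $H$-types. So even the $\mathrm{GL}_d(r)$-submodules need not be $\sigma$-stable. The lattice is also not a chain in general, so no Lemma~\ref{lem_uniser}-type bound $n\le 2p-2$ is available.

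More seriously, \cite{BardoeSin2000} describes $\mathrm{GL}_d(r)$-submodules, whereas you only know $S=\mathrm{PSL}_d(r)\le G$. On restriction to $S$, composition factors that differ by a power of $\det$ become isomorphic. Then $M$ can fail to be multiplicity-free, and the set of $\overline{\F}S$-submodules can be infinite.

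\textbf{A concrete case.} Take $d=3$, $r=4$, $p=2$, $n=21$. Then $\dim D=10$, $\dim(D\cap A)=9$, $\dim D^\perp=11$, and $D\cap A<D^\perp$.
The quotient $D^\perp/(D\cap A)$ has a $2$-dimensional $\F_2$-form. The perfect group $S$ must act trivially on it, since $\mathrm{GL}_2(2)$ is solvable.
Choose $u,v\in\F_2^\Omega$ spanning this quotient modulo $D\cap A$, and set $U_\theta=(D\cap A)+\overline{\F}(u+\theta v)$. Every $U_\theta$ is a $10$-dimensional $\overline{\F}S$-submodule, and $\sigma^b(U_\theta)=U_{\theta^{2^b}}$.
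For $\theta\in\F_{16}\setminus\F_4$, the module $U_\theta$ is realized over $\F_{16}$ but not over $\F_4$. Lemma~\ref{lem_finFieldObs} then gives only $21\le 30$. Yet $q=16$ is exactly the value allowed by Corollary~\ref{cor_ResValq} for $n=21$.
So the Frobenius argument cannot exclude this case. Your fallback fails here too, because it needs finitely many submodules of a given dimension.

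\textbf{The missing idea.} Such codes are ruled out by their low-weight words, not by their field of definition.
The paper uses that $D\cap A$ is the unique minimal $\overline{\F}S$-submodule of $A$; this follows from \cite{BardoeSin2000}, as shown in \cite{DevillersEtAl2025}. Since $\dim(C\cap A)\ge k-1\ge 1$, this forces $D\cap A\le C$.
Because $n-|\PG(H)|=r^{d-1}\equiv 0\pmod p$, each $\mathbf 1_{\Omega\setminus\PG(H_i)}$ lies in $D\cap A$. Hence $\mathbf 1_{\PG(H_2)}-\mathbf 1_{\PG(H_1)}\in C$, and this word has weight $2r^{d-2}$.
With $k\le n/2$ this gives $\frac n2+1\le 2r^{d-2}$, which fails for every $d\ge 3$.
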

\begin{proof}
Suppose to the contrary that $\mathrm{soc}(G)$ is $S=\mathrm{PSL}_d(p^s)$, $d\ge 3$. By replacing $C$ with $C^\perp$, we assume without loss of generality that $k\le n/2$. It is shown in the proof of \cite[Lemma 9.2]{DevillersEtAl2025} that $A\cap D$ is the unique minimal submodule of $A$ by using the results in \cite{BardoeSin2000}. That is, $\mathrm{soc}(A)=D\cap A$. Since $\dim(C)\ge 2$, we deduce that $C\cap A$ contains $D\cap A$.

Let $H_1,H_2$ be two distinct hyperplanes of $W$. We have $\mathbf{1}_{\Omega\setminus \mathrm{PG}(H_1)},\mathbf{1}_{\Omega\setminus \mathrm{PG}(H_2)}\in D\cap A$, since $n-|\mathrm{PG}(H_i)|\equiv 0\pmod{p}$ for $i\in\{1,2\}$. It follows that they are codewords of $C$, and so
\[
\mathbf{1}_{\Omega\setminus \mathrm{PG}(H_1)}-\mathbf{1}_{\Omega\setminus \mathrm{PG}(H_2)}=\mathbf{1}_{\mathrm{PG}(H_2)}-\mathbf{1}_{\mathrm{PG}(H_1)}\in C.
\]
This codeword has weight $2r^{d-2}$, since $\dim_{\F_r}(H_1\cap H_2)=d-2$. It follows that
\[
\frac{n}{2}+1\le n-k+1=d(C)\le2r^{d-2},
\]
i.e., $\frac{r^d-1}{r-1}+2\le4r^{d-2}$. This inequality holds for no $(r,d)$ pairs with $d\ge 3$ upon direct check: a contradiction. This completes the proof.
\end{proof}

\subsection{The case $S=\mathrm{Sp}_{2m}(2)$, $m\ge 3$}
\label{sec:symplectic}
Let $V=\mathbb{F}_2^{2m}$ with $m\ge3$, and let $b$ be the alternating form on $V$ such that $b(x,y)=\sum_{i=1}^{2m}x_iy_{2m+1-i}$ for $x,y\in V$. Let $S=\mathrm{Sp}_{2m}(2)$ be the corresponding symplectic group. For a quadratic form $Q:V\rightarrow\F_2$, its associated bilinear form is $f(x,y)=Q(x+y)-Q(x)-Q(y)$. Since the characteristic is $2$, the bilinear form $f$ is an alternating form, i.e., $f(x,x)=0$ for all $x\in V$. A subspace $U$ of $V$ is \textit{totally singular} if $Q(u)=0$ for all $u\in U$. The Witt index of $Q$ is the maximal dimension of its totally singular subspaces.

Let $\mathcal{X}$ be the set of all quadratic forms on $V$ with associated bilinear form $b$. Let $e_1,\ldots,e_{2m}$ be the standard basis of $V$. For each $Q\in\mathcal{X}$, we have
\begin{equation}\label{eqn_Qadef}
  Q(x)=\sum_{i=1}^{m}x_ix_{2m+1-i}+\sum_{i=1}^{2m}Q(e_i)x_i.
\end{equation}
Here, we use the fact that $a^2=a$ for $a\in\F_2$. For $Q\in \mathcal{X}$, we say that it is hyperbolic or has type $+$ if its Witt index is $m$, and say that it is elliptic or has type $-$ if its Witt index is $m-1$. Let $\Omega^\varepsilon$ be the set of quadratic forms in $\mathcal{X}$ of type $\varepsilon$ for $\varepsilon\in\{+,-\}$. We have $|\Omega^\varepsilon|=2^{m-1}(2^m+\varepsilon 1)$. Since $G=S$, the group $S$ acts $2$-transitively on both sets via $Q\mapsto Q^g$ for $Q\in\mathcal{X}$ and $g\in S$, where $Q^g(x)=Q(x^{g^{-1}})$ for $x\in V$. We refer to \cite{SastrySin2002} for more details.
\begin{lemma}\label{lem_muQ}
For $Q\in\mathcal{X}$, let $\mu(Q)=\sum_{i=1}^{m}Q(e_i)Q(e_{2m+1-i})$. We have $\Omega^+=\{Q\in\mathcal{X}:\mu(Q)=0\}$  and $\Omega^-=\{Q\in\mathcal{X}:\mu(Q)=1\}$.
\end{lemma}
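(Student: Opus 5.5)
We have to show that the Witt index of $Q$ is $m$ exactly when $\mu(Q)=0$. The plan is to use the Arf invariant: for a symplectic basis $e_i,f_i$ of $b$, the value $\sum_i Q(e_i)Q(f_i)$ is the Arf invariant of $Q$. It is $0$ precisely when $Q$ is hyperbolic and $1$ precisely when $Q$ is elliptic. To keep the argument self-contained, we prove this by a counting argument rather than quoting Arf's theorem.

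First, note that with $b(x,y)=\sum x_iy_{2m+1-i}$, the pairs $(e_i,e_{2m+1-i})$, $1\le i\le m$, form a symplectic basis. Write $a_i=Q(e_i)$ and $c_i=Q(e_{2m+1-i})$. Change coordinates by $y_i=x_i$ and $z_i=x_{2m+1-i}$; by \eqref{eqn_Qadef} this gives
\[
Q=\sum_{i=1}^m (y_iz_i+a_iy_i+c_iz_i)=\sum_{i=1}^m (y_i+c_i)(z_i+a_i)+\mu(Q).
\]
The substitution $y_i\mapsto y_i+c_i$, $z_i\mapsto z_i+a_i$ is a translation, so it is a bijection of $V$. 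Hence the number of zeros of $Q$ equals the number of zeros of $H+\mu(Q)$, where $H=\sum y_iz_i$ is the standard hyperbolic form. Let $N_0$ and $N_1$ be the numbers of $v$ with $H(v)=0$ and $H(v)=1$. A standard recursion on $m$ gives $N_0=2^{2m-1}+2^{m-1}$ and $N_1=2^{2m-1}-2^{m-1}$. Therefore $|Q^{-1}(0)|=2^{2m-1}+(-1)^{\mu(Q)}2^{m-1}$.

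Second, we show that the number of zeros determines the Witt index. A nondegenerate quadratic form in $2m$ variables over $\F_2$ whose bilinear form is $b$ (which is nondegenerate) is equivalent to $H$ if its Witt index is $m$, and to the elliptic form if its Witt index is $m-1$. The elliptic form is $\sum_{i<m}y_iz_i+y_m^2+y_mz_m+z_m^2$. Counting zeros of this form with the same recursion gives $2^{2m-1}-2^{m-1}$. Only these two indices occur, because a nondegenerate form in even dimension is an orthogonal sum of hyperbolic planes and at most one anisotropic plane. Since the two zero counts differ, $Q$ is hyperbolic if and only if $|Q^{-1}(0)|=2^{2m-1}+2^{m-1}$, which by the first step holds if and only if $\mu(Q)=0$. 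The same reasoning gives the elliptic case.

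The main obstacle is the second step, namely justifying that equivalence type is detected by the number of zeros. To keep it elementary, I would avoid quoting the classification and argue directly. If $\mu(Q)=0$, the translated coordinates exhibit the $m$-dimensional subspace $\{y_i=c_i\}$, which after translation is a linear subspace of zeros; so the Witt index is $m$. If $\mu(Q)=1$, suppose a totally singular subspace $U$ of dimension $m$ existed. Then $Q$ would vanish on $U$, and using $U$ together with a complementary totally singular subspace (Witt's lemma) $Q$ would take the shape of $H$ and hence have $N_0$ zeros, contradicting the count $2^{2m-1}-2^{m-1}$. The Witt index is then at most $m-1$, and it equals $m-1$ by the restriction to the first $m-1$ hyperbolic pairs.
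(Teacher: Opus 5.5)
Your main argument is correct, but it takes a different route from the paper. The paper splits $V$ into the $m$ orthogonal planes $\langle e_i,e_{2m+1-i}\rangle$ and observes that $Q$ restricted to the $i$-th plane is elliptic exactly when $Q(e_i)Q(e_{2m+1-i})=1$. It then ``sums over $i$'', tacitly using that the type of an orthogonal sum is determined additively by the summands (elliptic $\perp$ elliptic $\cong$ hyperbolic $\perp$ hyperbolic, i.e.\ additivity of the Arf invariant). You instead complete the square globally, $Q(x)=H(x+w)+\mu(Q)$. This gives $|Q^{-1}(0)|=2^{2m-1}+(-1)^{\mu(Q)}2^{m-1}$, and you then use that the number of zeros is an isometry invariant separating the only two isometry classes. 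Your route actually proves the additivity step the paper leaves implicit. Like the paper, it relies on the classification of nondegenerate quadratic forms over $\F_2$; the paper presupposes this when it defines the two types by Witt index $m$ and $m-1$.

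The ``elementary'' replacement in your last paragraph, however, does not work as written. The substitution $x\mapsto x+w$ is a translation, not a linear map, so it does not carry totally singular subspaces of $H$ to totally singular subspaces of $Q$. The set $\{y_i=c_i\}$ is a coset of an $m$-dimensional subspace and contains $0$ only when all $c_i=0$. On the linear subspace $\{y=0\}$ one has $Q=\sum_i c_iz_i$, which need not vanish. So this does not show that the Witt index is $m$ when $\mu(Q)=0$. For the same reason, the planes $\langle e_i,e_{2m+1-i}\rangle$ are not hyperbolic for $Q$ when $a_ic_i=1$, so ``restriction to the first $m-1$ hyperbolic pairs'' does not give the lower bound $m-1$. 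Your upper bound for $\mu(Q)=1$ is fine.

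To fix this, either keep the classification-based version, or argue by induction on $m$ from the zero count:
pick a nonzero singular $u$ (it exists whenever $|Q^{-1}(0)|>1$);
pick a singular $v$ with $b(u,v)=1$ (replace $v$ by $v+Q(v)u$);
split $V=\langle u,v\rangle\perp\langle u,v\rangle^\perp$.
The number of zeros of $Q$ on the complement is then $2^{2m-3}\pm2^{m-2}$, with the same sign as before. By induction the complement has a totally singular subspace of the right dimension, and adding $u$ extends it by one.
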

\begin{proof}
Let $a_j=Q(e_j)$ for $1\le j\le 2m$, and fix an integer $i$ such that $1\le i\le m$. The quadratic form $a_ix_i^2+x_ix_{2m+1-i}+a_{2m+1-i}x_{2m+1-i}$ on the subspace $\la e_i,e_{2m+1-i}\ra$ is elliptic if $a_i=a_{2m+1-i}=1$ and is hyperbolic otherwise. That is, its type depends on the value of $a_ia_{2m+1-i}$. The claim then follows by taking summation over $1\le i\le m$.
\end{proof}

Suppose that $\Omega=\Omega^\varepsilon$ with $\varepsilon\in\{+,-\}$. We have $p=2$ in this case by Table \ref{tab:remaining-families}. We extend a quadratic form in $\Omega$ to a quadratic form over $V\otimes\overline{\F}$ in the natural way. We also extend $b$ to a bilinear form over $V\otimes\overline{\F}$ similarly. By \cite[Proposition~8.4]{SastrySin2002} (i), $J$ is the socle of $M=\overline{\F}^{\Omega}$, i.e., the unique irreducible $\overline{\F} S$-submodule of $M$.  By \cite[Proposition~8.4]{SastrySin2002} (ii), $M/J$ has a unique irreducible module $N^\varepsilon$ isomorphic to $V\otimes\overline{\F}$.

We now construct the full preimage of $N^\varepsilon$ in $M$. We define
\[
  Q^+(x)=\sum_{i=1}^{m}x_ix_{2m+1-i},\quad Q^-(x)=x_1+x_{2m}+Q^+(x),
\]
so that $Q^+\in\Omega^+$ and $Q^-\in\Omega^-$. For each $Q\in\Omega$, $Q(x)+Q^\varepsilon(x)$ is linear map from $V\otimes\overline{\F}$ to $\overline{\F}$. For $w\in V$, define
$\lambda_w\in M$ by
\[
  (\lambda_w)_Q=Q(w)+Q^\varepsilon(w),\quad\text{for } Q\in\Omega.
\]
\begin{proposition}\label{prop:symplectic-structure}
Let  $B^\varepsilon=\left\langle \mathbf 1,\lambda_w:w\in V
 \right\rangle_{\overline{\mathbb F}}$.
Then $B^\varepsilon$ is the full preimage of $N^\varepsilon$ in $M=\overline{\F}^\Omega$, and we have $B^\varepsilon\le C\cap C^\perp$.
\end{proposition}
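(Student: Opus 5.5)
We need to show two things: that $B^\varepsilon$ is the full preimage of $N^\varepsilon$ in $M$, and that $B^\varepsilon\le C\cap C^\perp$.

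\textbf{Step 1: linear structure of $\lambda_w$.} Since $Q+Q^\varepsilon$ is an $\F_2$-linear functional on $V$ for each $Q\in\Omega$, we have $\lambda_{w+w'}=\lambda_w+\lambda_{w'}$. Hence $w\mapsto\lambda_w$ is additive and its image has $\F_2$-dimension at most $2m$.

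\textbf{Step 2: $G$-invariance of $B^\varepsilon$.} For $g\in S$ we compute $(\lambda_w)^g$. Its $Q$-coordinate is
\[
(\lambda_w)_{Q^{g^{-1}}}=Q(w^{g})+Q^\varepsilon(w^g).
\]
Here $Q^\varepsilon(w^g)-Q^\varepsilon(w)$ is a constant independent of $Q$. Therefore
\[
(\lambda_w)^g=\lambda_{w^g}+c\mathbf 1,
\]
so $B^\varepsilon$ is an $\overline{\F}S$-submodule containing $J$.

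\textbf{Step 3: $B^\varepsilon/J\cong V\otimes\overline\F$.} The map $w\mapsto\lambda_w+J$ is $S$-equivariant. It is injective: if $w\ne0$, then $Q\mapsto Q(w)$ is nonconstant on $\Omega$. Indeed, adding a linear functional $b(u,\cdot)$ to $Q$ preserves the type exactly when $Q(u)=0$, and one can choose such $u$ with $b(u,w)=1$. A basis $e_1,\dots,e_{2m}$ of $V$ then gives vectors $\lambda_{e_i}$ whose span modulo $J$ has dimension $2m$.

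It remains to pass from this $\F_2$-picture to $\overline\F$-linear independence of $\mathbf 1,\lambda_{e_1},\dots,\lambda_{e_{2m}}$. I would argue this via Lemma \ref{lem_muQ}. The coordinates $Q(e_i)+Q^\varepsilon(e_i)$ range over all $a\in\F_2^{2m}$ with $\mu$ fixed, and this set affinely spans $\F_2^{2m}$ for $m\ge3$. As a $0/1$ matrix, the evaluation matrix therefore has full rank over any field of characteristic $2$.

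Thus $B^\varepsilon/J$ is a $2m$-dimensional submodule of $M/J$ isomorphic to $V\otimes\overline\F$, which is irreducible. By uniqueness in \cite[Proposition~8.4]{SastrySin2002}(ii), $B^\varepsilon/J=N^\varepsilon$, so $B^\varepsilon$ is the full preimage.

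\textbf{Step 4: $B^\varepsilon\le C$.} The socle of $M$ is $J$. Assuming also that $N^\varepsilon$ is the socle of $M/J$, every submodule of dimension at least $2$ contains $J$, and its image in $M/J$ is nonzero and hence contains $N^\varepsilon$. Since $\dim C\ge2$, we get $C\supseteq B^\varepsilon$. The same argument applied to $C^\perp$, which is also $G$-invariant of dimension at least $2$ by Proposition \ref{prop:PAutdual}, gives $B^\varepsilon\le C^\perp$.

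\textbf{Main obstacle.} The main obstacle is the claim that $N^\varepsilon$ is the unique minimal submodule of $M/J$, i.e. the socle of $M/J$ and not merely a distinguished irreducible submodule. The uniqueness in \cite[Proposition~8.4]{SastrySin2002}(ii) should supply exactly this, and I would verify it there. If it fails, I would instead show directly that $B^\varepsilon$ is self-orthogonal, using that the weights of the $\lambda_w$ and of their pairwise sums are even, and combine this with the submodule lattice of \cite{SastrySin2002}.
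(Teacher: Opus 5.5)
Your proposal is correct and follows essentially the same route as the paper. Both use additivity of $w\mapsto\lambda_w$, the shift formula $\lambda_w^g=\lambda_{w^g}+(Q^\varepsilon(w^g)+Q^\varepsilon(w))\mathbf 1$, identification of $B^\varepsilon/J$ with $N^\varepsilon$ via \cite[Proposition~8.4]{SastrySin2002}, and the socle argument for $C$ and $C^\perp$. The paper relies on exactly the fact you flag, namely that $N^\varepsilon$ is the unique irreducible submodule of $M/J$. It only needs $B^\varepsilon\ne J$ together with $\dim B^\varepsilon\le 2m+1$, so your exact rank computation is more than required. That computation is also immediate anyway, since rank over $\F_2$ equals rank over $\overline{\F}$.

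One small slip in Step 2: the $Q$-coordinate of $\lambda_w^g$ is $Q(w^g)+Q^\varepsilon(w)$, not $Q(w^g)+Q^\varepsilon(w^g)$. The conclusion you draw from it is still right.
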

\begin{proof}
First, we claim that $\lambda_{u+v}=\lambda_u+\lambda_v$ for $u,v\in V\otimes\overline{\F}$. Indeed,  we have
\begin{align*}
 \lambda_{u+v}(Q)
 &=Q(u+v)+Q^\varepsilon(u+v)\\
 &=Q(u)+Q(v)+b(u,v)+Q^\varepsilon(u)+Q^\varepsilon(v)+b(u,v)\\
 &=\lambda_u(Q)+\lambda_v(Q)
\end{align*}
for $Q\in\Omega$.  For $g\in S$, we have $\lambda_w^g=\lambda_{w^g}+\left(Q^\varepsilon(w^g)+Q^\varepsilon(w)\right)\mathbf{1}\in B^\varepsilon$, since
\begin{align*}
 (\lambda_w^g)(Q)
 =\lambda_w(Q^{g^{-1}})
 =Q(w^g)+Q^\varepsilon(w)
 =\lambda_{w^g}(Q)+Q^\varepsilon(w^g)+Q^\varepsilon(w)
\end{align*}
for $Q\in\Omega$. It follows that $B^\varepsilon$ is a $\overline{\F} S$-submodule of $M$.

We claim that $B^\varepsilon\ne J$.  Take $Q\in\Omega$ with $Q\ne Q^\varepsilon$. Since $Q(x)+Q^\varepsilon(x)$ is a nonzero linear map from $V\otimes\overline{\F}$ to $\overline{\F}$, there is $w\in V\otimes\overline{\F}\setminus\{0\}$ such that $Q(w)+Q^\varepsilon(w)\ne 0$, i.e., $\lambda_w(Q)\ne 0$. On the other hand, we have $\lambda_w(Q^\varepsilon)=0$. Hence $\lambda_w\not\in J$. This proves the claim.

By \cite[Proposition~8.4]{SastrySin2002}, we deduce that $B^\varepsilon/J$ contains $N^\varepsilon$. They are equal by considering dimensions. Since an irreducible submodule of $C$ is also an irreducible submodule of $M$, we deduce that $C$ contains $J$. Since $\dim(C)\ge 2$, we have $C\ne J$, and so $C/J$ contains $N^\varepsilon$. We conclude that $C$ contains $B^\varepsilon$. The same arguments yield that $C^\perp$ contains $B^\varepsilon$.  
\end{proof}

Let $\mu=\mu(Q^\varepsilon)$, so that $\mu(Q)=\mu$ for $Q\in\Omega$, cf. Lemma \ref{lem_muQ}. For $Q\in\Omega$, let $v_Q=(Q(e_1),\ldots,Q(e_{2m}))\in\F_2^{2m}$ and $T^\varepsilon=\{v_Q:Q\in\Omega\}$. We have a bijection between $\Omega$ and $T$ via $Q\mapsto v_Q$, cf. \eqref{eqn_Qadef}. We have $T^\varepsilon=\{v\in\F_2^{2m}:Q^+(v)=\mu\}$ by Lemma \ref{lem_muQ} and the fact $\mu(Q)=Q^+(v_Q)$. In particular, we have
\begin{align}
  \Omega=\Omega^\varepsilon&=\left\{\sum_{i=1}^{m}x_ix_{2m+1-i}+\sum_{i=1}^{2m}v_{2m+1-i}x_{i}: v\in T^\varepsilon\right\}\notag\\
     &=\{Q^+(x)+b(v,x): v\in\F_2^{2m}\text{ such that }Q^+(v)=\mu\}.\label{eqn_OmegaEps}
\end{align}
For a property $\mathbf{P}$, let $[\![\mathbf{P}]\!]=1$ if it holds and $[\![\mathbf{P}]\!]=0$ otherwise.

\begin{theorem}\label{thm:symplectic}
Take notation as above. Then $\mathrm{soc}(G)$ can not be $\Sp_{2m}(2)$, $m\ge 3$.
\end{theorem}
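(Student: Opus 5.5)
Assume to the contrary that $S=\Sp_{2m}(2)$ with $m\ge3$ and $\Omega=\Omega^\varepsilon$. By Proposition~\ref{prop:symplectic-structure} we have $B^\varepsilon\le C\cap C^\perp$. The plan is to find a nonzero vector of small weight in $B^\varepsilon$ and play it off against the MDS distance of both $C$ and $C^\perp$.

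\textbf{Step 1: bounding $k$.} Since $B^\varepsilon\le C^\perp$, we get $\dim B^\varepsilon=2m+1\le n-k$, and since $B^\varepsilon\le C$, we get $2m+1\le k$. So both $C$ and $C^\perp$ have dimension at least $2m+1$, and both have minimum distance at most $n-2m$. By the MDS property, the smaller of $d(C)=n-k+1$ and $d(C^\perp)=k+1$ is at most $n/2+1$. To contradict this, I will show that $B^\varepsilon$ contains a nonzero codeword whose weight is much smaller than $n/2$. Such a codeword lies in both $C$ and $C^\perp$, so it forces $\max\{d(C),d(C^\perp)\}$ to be small. That is incompatible with $d(C)+d(C^\perp)=n+2$.

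\textbf{Step 2: weights in $B^\varepsilon$.} Using \eqref{eqn_OmegaEps}, write $Q=Q^++b(v,\cdot)$ with $v\in T^\varepsilon$. Then $Q(w)+Q^\varepsilon(w)=b(v+v_0,w)$, where $Q^\varepsilon=Q^++b(v_0,\cdot)$. A general element of $B^\varepsilon$ is $c\mathbf 1+\lambda_w$ with $w\in V\otimes\overline{\F}$, because $\lambda$ is additive over $\overline{\F}$-combinations. Its value at $Q$ is
\[
c+b(v_0,w)+b(v,w).
\]
Take $w\in V$, so that $\lambda_w$ is $\{0,1\}$-valued.
\begin{itemize}
\item Choosing $c$ to cancel the constant gives the codeword $v\mapsto [\![b(v,w)=1]\!]$ on the quadric $T^\varepsilon$.
\item Choosing $c$ differently gives its complement.
\end{itemize}
For nonzero $w$, the number of $v$ in the quadric $\{Q^+(v)=\mu\}$ with $b(v,w)=1$ is $2^{2m-2}$ or $2^{2m-2}\pm 2^{m-1}$, by a standard count over hyperplane sections. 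So the weight of $\lambda_w$ is roughly $n/2$. That alone is not enough, and I need smaller supports.

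\textbf{Step 3: getting a small support.} Over $\overline{\F}$ (with $p=2$), take combinations $\sum_j\alpha_j\lambda_{w_j}+c\mathbf1$, where $w_1,\dots,w_t$ span a subspace $U$. The value at $v$ depends only on the coset data $(b(v,w_j))_j\in\F_2^t$ through an affine function over $\overline{\F}$. With $t=2m$ we can separate points. The function $v\mapsto \sum_j\alpha_j b(v,w_j)+c$ vanishes exactly on those $v$ whose $\F_2$-vector $(b(v,w_j))_j$ satisfies one $\overline{\F}$-linear equation. Choosing $\alpha_j$ generic, but with $c=\alpha_1$ say, the zero set is exactly the set of $v$ with $b(v,w_1)=1$ and $b(v,w_j)=0$ for $j\ge2$. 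So the nonzero entries are the complement of that set, which again gives large weight.

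I therefore reverse the goal. I want a codeword with many zeros, i.e. an affine $\overline{\F}$-function on $\F_2^t$ vanishing on most images. Such a function vanishes on at most $t+1$ affinely independent points per hyperplane. The right target is a zero count exceeding $\max\{k-1,n-k-1\}$. Since $\dim B^\varepsilon=2m+1$ is tiny compared with $n$, the best direct use is the dimension bound. This leads to Step 4.

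\textbf{Step 4: alternative via self-orthogonality (expected route).} Since $B^\varepsilon\le C\cap C^\perp$ and $J\le B^\varepsilon$, we get $\mathbf 1\in C\cap C^\perp$. An MDS code containing $\mathbf 1$ together with the weight-$\approx n/2$ binary words $\lambda_w$ is strongly constrained. Shortening and puncturing $C$ on the support of $\lambda_w$ gives MDS codes of length $\approx n/2$. Iterating along a flag $w_1,\dots,w_{2m}$ restricts to supports of size about $n/2^{j}$. At each stage the restricted word $\mathbf 1$ still lies in the code and in its dual, which forces dimension constraints $k-j\ge1$ and $n-k-j\ge 1$ relative to the shrinking length. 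When the length drops below $2m$, this contradicts the MDS bound of Lemma~\ref{lem_basic-bound} combined with Corollary~\ref{cor_ResValq}, namely $q\ge (n+2)/2$ with $q$ a power of $2$. I expect this iterated shortening argument, and the bookkeeping of the weight counts on the quadric, to be the main obstacle. If it fails, I would fall back on the explicit minimal weight of $B^\varepsilon$ from Step 2 combined with a Magma check for $m=3$.
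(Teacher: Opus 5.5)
\textbf{There is a genuine gap.} As written, your proposal never reaches a contradiction. The cause is one misjudgment at the end of Step 2, where you say words of weight roughly $n/2$ are ``not enough'' and that you need weight much smaller than $n/2$. That is false. If $0\ne y\in C\cap C^\perp$, then
\[
\mathrm{wt}(y)\ge d(C)=n-k+1 \quad\text{and}\quad \mathrm{wt}(y)\ge d(C^\perp)=k+1,
\]
so $\mathrm{wt}(y)\ge\max\{n-k+1,k+1\}\ge n/2+1$. Equivalently, $y$ has at most $\min\{k-1,n-k-1\}\le n/2-1$ zeros. Your Step 3 target uses $\max\{k-1,n-k-1\}$ where $\min$ is the correct quantity, and that is what makes the goal look unreachable.

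Your own Step 2 already finishes the proof. For $0\ne w\in V$, the word $x=\lambda_w+c\mathbf 1$ is $\{0,1\}$-valued. It is nonconstant, because both of your counts are positive. Both $x$ and $x+\mathbf 1$ lie in $B^\varepsilon\le C\cap C^\perp$, and their weights add up to $n$, so one of them has weight at most $n/2$. This contradicts the bound above.

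Concretely, take $Q^+(w)=0$. Your count gives weight $2^{2m-2}$ for one word and $2^{2m-2}+(-1)^\mu 2^{m-1}$ for its complement, while $n/2=2^{2m-2}+(-1)^\mu 2^{m-2}$. So one of them has weight $<n/2$, giving $n+2=d(C)+d(C^\perp)<n$. This is exactly the paper's proof.

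Steps 3 and 4 are therefore unnecessary, and in any case they are not a proof. The iterated shortening and puncturing is only sketched. You do not verify that the restricted $\mathbf 1$ remains in both the derived code and its dual at each stage. You also do not show how the bound $q\ge(n+2)/2$ would produce a contradiction once the length is small. Finally, a Magma check for $m=3$ would leave all $m\ge4$ open.
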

\begin{proof}
We continue with the above arguments. By Proposition \ref{prop:symplectic-structure}, we have $B^\varepsilon\le C\cap C^\perp$. We take a nonzero vector $w\in \F_2^{2m}$ such that $Q^+(w)=0$, and take $c=Q^+(w)+Q^\varepsilon(w)\in\F_2$. By the paragraph preceding this theorem, we deduce that $(\lambda_w)_Q=c+b(v,w)\in\F_2$ for $Q\in\Omega$ such that $Q(x)=Q^+(x)+b(v,x)$ with $Q^+(v)=\mu$. The codeword $\lambda_w+c\mathbf 1$ is in $\F_2^\Omega$ and the number of its zero coordinates is
\begin{align*}
 &\sum_{v\in\F_2^{2m}}[\![Q^+(v)=\mu]\!]\cdot [\![b(v,w)=0]\!]
=\frac{1}{4}\sum_{v\in\F_2^{2m}}\left(1+(-1)^{Q^+(v)+\mu}\right)
  \left(1+(-1)^{b(v,w)}\right)\\
&=2^{2m-2}+\frac{1}{4}\sum_{v\in\F_2^{2m}}(-1)^{Q^+(v)+\mu}
  +\frac{1}{4}\sum_{v\in\F_2^{2m}}(-1)^{Q^+(v)+\mu+b(v,w)}.
\end{align*}
Here, we used the fact $\frac{1}{4}\sum_{v\in\F_2^{2m}}(-1)^{b(v,w)}=0$. Since $Q^+(x)=x_1x_{2m}+\cdots+x_mx_{m+1}$, it is elementary to deduce that the above number is $2^{2m-2}+(-1)^\mu 2^{m-1}$. Since the all-one vector is in $C\cap C^\perp$ and has Hamming weight $n=2^{2m-1}+(-1)^\mu 2^{m-1}$, we deduce that either $\lambda_w+c\mathbf 1$ or $\lambda_w+(c+1)\mathbf 1$ has weight less than $\frac{1}{2}n$. It follows that
\[
  n+2=d(C)+d(C^\perp)<\frac{1}{2}n+\frac{1}{2}n=n.
\]
This contradiction completes the proof.
\end{proof}

\subsection{The case $S=\mathrm{PSL}_2(r)$, $r$ odd}
\label{sec:projective-line}
The group $\mathrm{PSL}_2(3)$ is solvable, so we have $r\ge 5$ in this case. Let $r=\ell^s\ge5$ with $\ell$ an odd prime, and let $\Omega$ be the set of $1$-dimensional subspaces of $V=\F_r^2$. By Table \ref{tab:remaining-families}, the characteristic of $\F$ is $p=2$. We identify $\Omega$ with $\F_r\cup\{\infty\}$ via $\la (1,a)\ra\mapsto a$ for $a\in\F_r$ and $\la (0,1)\ra\mapsto\infty$. By  \cite[Lemma 4.1 (i)]{GuralnickTiep2011}, the socle of $M=\overline{\F}^\Omega$ is $J$, i.e., $J$ is the unique irreducible submodule of $M$.

Let $\Delta$ be the set of nonzero squares of $\F_r$, and let $u$ be a root of $X^2+X+1=0$ in $\overline{\F}$. By pp. 14-15 of \cite{mortimer1980modular}, the heart $\overline{A}=A/J$ is the direct sum of two irreducible $\overline{\F} S$-submodules $N_1\oplus N_2$, and $N_1,N_2$ are the only proper submodules of $\overline{A}$. For $i=1,2$, the module $N_i$ is generated by an element $a_ie_0+b_ie_\infty+\mathbf{1}_\Delta$, where the $a_i$'s and $b_i$'s are as in the following table.
\begin{table}[h]
\begin{tabular}{c|cccc|c}
$r\pmod{8}$ & $a_1$ & $b_1$ & $a_2$ & $b_2$ & realized over \\ \hline
$1$         & $0$   & $0$   & $1$   & $1$   & $\F_2$        \\
$3$         & $u$   & $u^2$   & $u^2$   & $u$   & $\F_4$   \\
$5$         & $u$   & $u$   & $u^2$   & $u^2$   & $\F_4$     \\
$7$         & $0$   & $1$   & $1$   & $0$   & $\F_2$
\end{tabular}
\end{table}
Let $C_1,C_2$ be the full preimage of $N_1,N_2$ in $M$ respectively. They are the only $\overline{\F} S$-submodules of $A$ that contains $J$. Moreover, they are orthogonal to each other when $r\equiv 1\pmod{4}$, and are both self-dual when $r\equiv 3\pmod{4}$ by \cite{mortimer1980modular}. Since $\dim(C_i)=\frac{r+1}{2}$ for $i=1,2$, we deduce that $C_1=C_2^\perp$ when $r\equiv 1\pmod{4}$.
\begin{theorem}\label{thm:psl2}
Take notation as above. If $S=\mathrm{PSL}_2(r)$ with $r\ge 5$ odd, then $r=5$ and $C$ is a $[6,3,4]$ code.
\end{theorem}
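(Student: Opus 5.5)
First I will pin down which submodule $C$ can be. We have $p=2$ and $n=r+1$ even, so $J\le A$. The socle of $M$ is $J$, so every nonzero submodule of $M$ contains $J$. Since $\dim C\ge 2$, this gives $J\subsetneq C$. The submodules of $M$ lying between $J$ and $A$ are $J$, $C_1$, $C_2$ and $A$. By Proposition \ref{prop:PAutdual} and Lemma \ref{lem_duality} we may replace $C$ by $C^\perp$ and assume $k\le n/2$. If $C\le A$, then $C\in\{C_1,C_2\}$. If $C\not\le A$, then $C+A=M$, so $\dim(C\cap A)=k-1$; the only candidate is $C\cap A=J$, which forces $k=2$ and $C=J\oplus\langle x\rangle$ with $x$ spanning a $G$-stable line modulo $J$. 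That is impossible, since $M/J$ has no one-dimensional $G$-quotient outside $M/A$ for a perfect group, and $\dim M/A=1$ already forces $C=M$. (I would make this cleaner with a short argument on $M/J$.) So from now on $C=C_i$, with $k=(r+1)/2$.

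Next I will get an upper bound on $d(C_i)$ from the generator $a_ie_0+b_ie_\infty+\mathbf 1_\Delta$ of $N_i$. The preimage contains the vectors
\[
x_i=a_ie_0+b_ie_\infty+\mathbf 1_\Delta \quad\text{and}\quad x_i+\mathbf 1 .
\]
Their weights are $(r-1)/2+|\{a_i,b_i\}\setminus\{0\}|$ and, after subtracting $\mathbf 1$, $(r-1)/2+|\{a_i+1,b_i+1\}\setminus\{0\}|$. For $r\equiv 1,7\pmod 8$, one of these vectors has weight at most $(r-1)/2+1=(r+1)/2$. MDS requires $d=n-k+1=(r+3)/2$, so these cases are impossible. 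For $r\equiv 3,5\pmod 8$ the entries $u,u^2$ are nonzero, and $u+1=u^2$ is also nonzero, so both weights equal $(r+3)/2$ and this test gives nothing.

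For $r\equiv 3,5\pmod 8$ I will use a shorter combination instead. Take $x_i$ and its image $x_i^g$ under a suitable $g\in S$, for instance a translation $z\mapsto z+t$ or a square multiplication, which fixes $\Delta$ but moves $0$. Then form $x_i+\lambda x_i^g$ together with multiples of $\mathbf 1$. The aim is to make the result vanish on many positions, namely where $\mathbf 1_\Delta$ and $\mathbf 1_{\Delta+t}$ agree. The count of $z$ with prescribed quadratic characters of $z$ and $z+t$ is about $r/4$ by the cyclotomic numbers of order $2$. So a combination such as $x_i+x_i^g$ (note $\mathbf 1_\Delta+\mathbf 1_{\Delta+t}$ in characteristic $2$) has weight about $r/2+O(1)$. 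The exact cyclotomic numbers should give weight at most $(r+1)/2$, hence below $(r+3)/2$, for all $r\ge 7$ (i.e.\ $r\ge 11$ in these classes) and possibly $r=13$, leaving only $r=5$.

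Finally, for $r=5$ we have $n=6$ and $k=3$, so $C$ is a $[6,3]$ code and, by Theorem \ref{thm:psl2}'s claim, it remains to note that $d=4$ is attained. This is the hexacode situation: $C_i$ is realized over $\F_4$ and is equivalent to $\mathcal H_6$. The main obstacle is the precise weight count in the $r\equiv 3,5\pmod 8$ case. I need an explicit codeword of weight $\le (r+1)/2$, which means handling the extra coordinates $0,\infty$ carefully and choosing the scalar $\lambda\in\{1,u,u^2\}$ well. If the translation approach does not close small cases such as $r=11,13$, I would check those directly, or fall back on Lemma \ref{lem_finFieldObs}: the code is realized over $\F_4$, so $n\le 2\cdot 4-2=6$, which already forces $r\le 5$. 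In fact this argument disposes of $r\equiv 3,5\pmod 8$ immediately, and likewise $r\equiv 1,7\pmod 8$ via $\F_2$ (giving $n\le 2$). So I would lead with Lemma \ref{lem_finFieldObs} applied with $b=2$, and keep the weight argument as a backup.
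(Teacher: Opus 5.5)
Your final argument is exactly the paper's proof: $C\in\{C_1,C_2\}$, both spanned by vectors in $\F_4^\Omega$, so $\sigma^2(C)=C$, and Lemma~\ref{lem_finFieldObs} gives $r+1\le 6$, hence $r=5$, $k=3$ and $d=n-k+1=4$; the weight and cyclotomic-number arguments are unnecessary. The one step to tidy is ruling out $C\not\le A$, where your stated reason is muddled: simply note that $C^\perp$ is also a nonzero submodule, so it contains $J=\mathrm{soc}(M)$, which gives $C\le J^\perp=A$, as the paper does.
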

\begin{proof}
Let $q_0=2^{f_0}$, where $f_0=1$ if $r\equiv \pm 1\pmod{8}$ and $f_0=2$ if $r\equiv \pm 3\pmod{8}$. Let $\sigma$ be as defined in \eqref{eqn_sigma}. By considering dimensions, we deduce that both $C$ and $C^\perp$ contains $J$. It follows that $J\le C\le J^\perp$. Since $\dim(C)\ge 2$, it is either $C_1$ or $C_2$ by the paragraph preceding this theorem. It follows that $C$ can be realized over $\F_{q_0}$, i.e., it has a basis consisting of elements in $\F_{q_0}^\Omega$. We thus have $\sigma^{f_0}(C)=C$. By Lemma \ref{lem_finFieldObs}, we have $n=r+1\le 2q_0-2$. Since $r\ge 5$, we deduce that $r=5$, $q_0=4$. It follows that $k=\dim(C)=\frac{r+1}{2}=3$. There is a unique $[6,3,4]_4$ code up to equivalence, i.e., the hexacode. This completes the proof.
\end{proof}

\subsection{The case $S=\mathrm{Sz}(2^{2a+1})$}\label{sec:suzuki}
Let $r=2^{2a+1}\ge8$ and $m=2^{a+1}$. In this case, we have $|\Omega|=r^2+1$ and $p\mid r+m+1$ by Table \ref{tab:remaining-families}, where $p$ is the characteristic of $\F$. The $\overline{\F} S$-submodules of $M=\overline{\F}^\Omega$ are described in Appendix~D.1 of \cite{Hiss1990}; see also \cite[Section~4.2]{GuralnickTiep2011}. They form a chain under inclusion: $0<J<B_1<B_2<A<M$. Here, $B_1/J$, $B_2/B_1$ and $A/B_2$ are simple modules of dimensions $\frac{m}{2}(r-1)$, $(r-1)(r+1-m)$ and $\frac{m}{2}(r-1)$ respectively.
\begin{theorem}\label{thm:suzuki}
Take notation as above. Then $\mathrm{soc}(G)$ can not be $S=\mathrm{Sz}(2^{2a+1})$, $a\ge 1$.
\end{theorem}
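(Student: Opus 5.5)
The submodules form a chain $0<J<B_1<B_2<A<M$, so Lemma~\ref{lem_uniser} applies directly. Every $\overline{\F}S$-submodule of $M$ lies in this chain. By Lemma~\ref{lem_sigCommuG}, $\sigma(C)$ is a submodule of the same dimension as $C$. The six members of the chain have pairwise distinct dimensions, namely $0$, $1$, $1+\frac m2(r-1)$, $r^2-\frac m2(r-1)$, $r^2$ and $r^2+1$. Hence $\sigma(C)=C$. This is exactly the hypothesis of Lemma~\ref{lem_uniser} (equivalently Lemma~\ref{lem_finFieldObs} with $b=1$), and it gives
\[
n=r^2+1\le 2p-2 .
\]

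Next I would bound $p$ using the arithmetic constraint from Table~\ref{tab:remaining-families}, namely $p\mid r+m+1$. This gives $p\le r+m+1=r+\sqrt{2r}+1$, and therefore $2p-2\le 2r+2\sqrt{2r}$. Since $r\ge 8$ we have $r^2\ge 8r>2r+2\sqrt{2r}$, so $r^2+1>2p-2$. This contradicts the inequality above. So $\mathrm{soc}(G)=\mathrm{Sz}(2^{2a+1})$ is impossible.

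Two subtleties need care. First, Lemma~\ref{lem_uniser} is stated for $\overline{\F}S$-submodules, while $C$ is only known to be $G$-invariant. Since $S\le G$, however, $C$ is also an $\overline{\F}S$-submodule, so this is harmless. Second, the argument needs $C$ to be nontrivial, so that Lemma~\ref{lem_finFieldObs} applies; this is a standing assumption. I expect no real obstacle here. The only step to double-check is that the dimensions in the chain are pairwise distinct, which is automatic for a strictly increasing chain. The cited description of the submodule lattice (Hiss, Guralnick--Tiep) already does the heavy lifting.
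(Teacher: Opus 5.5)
Your proposal is correct and follows the paper's proof essentially verbatim. The chain $0<J<B_1<B_2<A<M$ forces $\sigma(C)=C$ via Lemma~\ref{lem_uniser}, giving $r^2+1\le 2p-2$, which contradicts $p\mid r+m+1$ with $m=\sqrt{2r}$; the only cosmetic difference is that you bound $2p-2\le 2r+2\sqrt{2r}<8r\le r^2$, whereas the paper uses $2m\le r$ to get $2p-2\le 3r<r^2+1$.
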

\begin{proof}
We continue with the above arguments. By Lemma \ref{lem_uniser}, we deduce that $n=r^2+1\le 2p-2$. On the other hand, we have $2m=2^{a+2}\le r=2^{2a+1}$ and $p\mid r+m+1$, so $2p-2\le2r+2m\le3r<r^2+1$: a contradiction. This completes the proof.
\end{proof}

\subsection{The case $S=\mathrm{PSU}_3(r)$, $r>2$}
\label{sec:unitary}

Let $r=\ell^s > 2$, where $\ell$ is a prime. Let $W = \mathbb{F}_{r^2}^3$, and define $H(x)=x_1x_3^{r}+x_2^{r+1}+x_3x_1^{r}$. We have $\Omega=\{\la u\ra_{\F_{r^2}}:u\in W\setminus\{0\}, H(u)=0\}$, $|\Omega|=r^3+1$. If $\la u\ra_{\F_{r^2}},\la v\ra_{\F_{r^2}}$ are two distinct elements in $\Omega$, then $\{\la au+bv\ra_{\F_{r^2}}:a,b\in\F_{r^2}^2\setminus\{(0,0)\}\}\cap\Omega$ has size $r+1$, and we call it a block of $\Omega$. Let $\Delta$ be the set of all such blocks. The pair $(\Omega,\Delta)$ with the inclusion relation defines a 2-$(r^3+1,r+1,1)$ design, i.e., the classical unital. We refer to \cite[p.~15]{mortimer1980modular} for more details.

We have $p\mid r+1$ by Table \ref{tab:remaining-families}, where $p$ is the characteristic of $\F$. Let $D=\la \mathbf{1}_B:B\in\Delta\ra_{\overline{\F}}\le M=\overline{\F}^\Omega$, and we call $D$ the design module. The dimension of $D$ is $r(r^2-r+1)$ by \cite[Corollary 5.1]{Hiss2004}, so $\dim(D^\perp)=r^2-r+1$. We have $J\le D, D^\perp\le A=J^\perp$, and $D/J$ is not isomorphic to $D^\perp/J$ by considering dimensions.
\begin{lemma}\label{lem_unitalD}
The modules $D$ and $D^\perp$ are not MDS codes.
\end{lemma}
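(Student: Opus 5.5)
Since $D^\perp$ is the dual of $D$, Lemma \ref{lem_duality} lets me deal with the two modules together. The code $D$ has length $n=r^3+1$ and $\dim D=r(r^2-r+1)=r^3-r^2+r$, and $D^\perp$ has dimension $r^2-r+1$. Both dimensions lie strictly between $1$ and $n-1$, so both codes are nontrivial, and $D$ is MDS exactly when $D^\perp$ is. It is therefore enough to show that $D^\perp$ is not MDS, and the most direct way is to find a nonzero codeword in $D$ whose weight is smaller than the Singleton value $d=n-\dim D+1=r^2-r+2$.

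\textbf{Step 1 (block codewords).} Every block $B\in\Delta$ gives $\mathbf{1}_B\in D$, and $\mathbf{1}_B$ has weight $r+1$. For $r>2$ we have $r+1<r^2-r+2$, because this is the same as $r^2-2r+1>0$, i.e.\ $(r-1)^2>0$. Hence $D$ has minimum distance at most $r+1$, which is strictly below the Singleton bound. So $D$ is not MDS, and by Lemma \ref{lem_duality} neither is $D^\perp$.

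\textbf{Step 2 (checks).} Two routine points need confirming. First, $\mathbf{1}_B\neq 0$, since $B$ has $r+1\ge 1$ points; the characteristic plays no role, as this is an ordinary $0/1$ vector. Second, the dimension formula $\dim D=r(r^2-r+1)$ from \cite[Corollary 5.1]{Hiss2004} is used exactly as the paper states it, with $p\mid r+1$. There is no real obstacle in this argument. The only place to be careful is that "MDS" is tested against the actual dimension of $D$ over $\overline{\F}$: Lemma \ref{lem_duality} is stated for nontrivial codes, and a weight-$(r+1)$ codeword is only a contradiction because $\dim D$ is large, which makes the Singleton distance $r^2-r+2$ exceed $r+1$.

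If one prefers a bound that uses only the parameters, Lemma \ref{lem_basic-bound} gives a second route. For $D^\perp$ we have $\min\{k,n-k\}=r^2-r+1$, so MDS would force $r^3+1\le q+r^2-r$. This only constrains $q$ and does not give a contradiction by itself, so the explicit low-weight block vector in Step 1 is the argument I would use.
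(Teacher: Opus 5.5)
Your proof is correct and is essentially the same as the paper's. You use the block vector $\mathbf{1}_B\in D$ of weight $r+1$, which lies below the Singleton value $n-\dim D+1=r^2-r+2$ because $(r-1)^2>0$, and then Lemma~\ref{lem_duality} passes the conclusion to $D^\perp$; the aside about Lemma~\ref{lem_basic-bound} is unnecessary, as you already note.
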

\begin{proof}
It suffices to consider $D$ by Lemma \ref{lem_duality}. The length is $n=r^3+1$. The dimension of $D$ is $k=r(r^2-r+1)$ by \cite[Corollary 5.1]{Hiss2004}. For a block $B\in\Delta$, $\mathbf{1}_B$ is in $D$ and has weight $r+1$. It follows that $n-k+1=d(D)\le r+1$, i.e., $(r-1)^2\le 0$, which contradicts $r>2$. 
\end{proof}

By \cite[Theorem~4.1]{Hiss2004} and \cite[Lemma~4.1 and Section~4.4]{GuralnickTiep2011}, we have the following information on the $\overline{\F} S$-module structure of $M=\overline{\F}^\Omega$:
\begin{enumerate}
\item[(1)] If $p$ is odd and $p\ne3$, or if $p=2$ and $r\equiv3\pmod4$, then all the proper $\overline{\F} S$-submodules form the chain $J<D^\perp<D<A$.
\item[(2)] If $p=2$ and $r\equiv1\pmod4$, then $D/J$ and $D^\perp/J$ are nonisomorphic irreducible modules and their direct sum is $\overline{A}=A/J$.
\item[(3)] If $p=3$, then $J<D^\perp<D<A$, and $D/D^\perp$ is an irreducible $\overline{\F} \mathrm{PGU}_3(r)$-module which splits into the direct sum of $3$ irreducible $\overline{\F} S$-submodules.
\end{enumerate}

\begin{theorem}\label{thm:unitary}
Take notation as above. Then $\mathrm{soc}(G)$ can not be $S=\mathrm{PSU}_3(r)$, $r>2$.
\end{theorem}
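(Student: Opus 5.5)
We argue by contradiction: assume $\mathrm{soc}(G)=S=\mathrm{PSU}_3(r)$ with $r>2$ and $p\mid r+1$, and treat the three module-structure cases (1)--(3) separately. First we note, as before, that $C\notin\{0,J,A,M\}$ because $2\le k\le n-2$. In every case $C\otimes\overline{\F}$ (which we again call $C$) is an $\overline{\F}S$-submodule of $M$. Since $p\mid r+1$ and $n=r^3+1\equiv 0\pmod{p}$, we have $J\le A$ and $M$ has socle $J$. So as soon as $k\ge 2$ the code $C$ contains $J$. Applying the same reasoning to $C^\perp$ gives $C\le A$. Hence $J<C<A$.

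\emph{Case (1).} The submodules form the chain $0<J<D^\perp<D<A<M$, so $C\in\{D^\perp,D\}$. Both possibilities are excluded by Lemma \ref{lem_unitalD}. (Alternatively, the chain is uniserial, so Lemma \ref{lem_uniser} gives $r^3+1\le 2p-2\le 2r$, which is absurd.)

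\emph{Case (2).} Here $p=2$ and $A/J=D/J\oplus D^\perp/J$, with the two summands irreducible and nonisomorphic. Therefore the only submodules strictly between $J$ and $A$ are $D$ and $D^\perp$, and Lemma \ref{lem_unitalD} again gives a contradiction.

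\emph{Case (3).} Here $p=3$ and $D/D^\perp=X_1\oplus X_2\oplus X_3$ with the $X_i$ irreducible $\overline{\F}S$-modules. The submodules of $A$ containing $J$ are $D^\perp$, $D$, and the full preimages $C_I$ of $\bigoplus_{i\in I}X_i$ for nonempty proper $I\subset\{1,2,3\}$. We also have to check that $J<D^\perp$ and $D<A$ admit no further intermediate submodules; this should follow from \cite[Theorem~4.1]{Hiss2004}, which says $D^\perp/J$ and $A/D$ are irreducible. The cases $D$ and $D^\perp$ are excluded by Lemma \ref{lem_unitalD}.

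For the six modules $C_I$ we use a weight argument. Each $C_I$ contains $D^\perp$, so its minimum distance is at most $d(D^\perp)$. Each $C_I$ is contained in $D$, so $C_I^\perp\supseteq D^\perp$ and the minimum distance of $C_I^\perp$ is also at most $d(D^\perp)$. If $C_I$ were MDS, then $d(C_I)+d(C_I^\perp)=n+2$, so $n+2\le 2\,d(D^\perp)$. It therefore suffices to exhibit a nonzero word of $D^\perp$ of weight at most $(r^3+1)/2$.

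\emph{Constructing the low-weight word.} We try the difference $\mathbf 1_{B_1}-\mathbf 1_{B_2}$ of two blocks; this is orthogonal to every block because block intersections are constant mod $3$. If that fails, we use the characteristic vector of a tangent-line or secant-line complement, adjusted by $\mathbf 1$. Concretely, take a point $P\notin\Omega$ and its polar line $\ell$. The vector $\mathbf 1_{\ell\cap\Omega}$ has weight $r+1$. Its inner product with $\mathbf 1_B$ for a block $B$ is $|B\cap\ell|\in\{0,1,r+1\}$, and this is not always $0$ mod $3$, so we would subtract suitable multiples of $\mathbf 1$ or take combinations of such vectors. Any word supported on a union of a few blocks has weight $O(r)\ll (r^3+1)/2$, which gives the contradiction.

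\emph{Fallback.} If no explicit word works, use Galois descent. The $\sigma$-images of $C_I$ are submodules of the same dimension, so the Frobenius permutes the three $C_I$ with $|I|=1$ (and likewise the three with $|I|=2$). Hence $\sigma^{6}(C)=C$, or $\sigma^{b}(C)=C$ with $b$ bounded in terms of how $\mathrm{Gal}$ acts on the $X_i$; these are realized over $\F_{3^{b}}$ with $3^b$ controlled by the field of definition of the cube roots of unity involved, which is $\F_9$ or a small extension. Lemma \ref{lem_finFieldObs} then gives $r^3+1\le 2\cdot 3^{b}-2$, which fails for $r\ge 5$ with $3\mid r+1$.

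The main obstacle is case (3): finding an explicit low-weight word in $D^\perp$, or else controlling the field of realization of the $X_i$.
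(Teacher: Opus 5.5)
Your reduction to $J<C<A$ and your treatment of cases (1) and (2) are correct and match the paper. The gap is in case (3): neither of your two routes is actually completed.

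\emph{The weight route.} It is unsupported, and the words you propose are not in $D^\perp$. Distinct blocks meet in $0$ or $1$ points, so block intersections are not constant mod $3$. Take $\omega\in B_1\setminus B_2$. Exactly $r+1$ of the $r^2$ blocks through $\omega$ meet $B_2$, so some block $B\ne B_1$ through $\omega$ misses $B_2$. Then $\langle \mathbf 1_{B_1}-\mathbf 1_{B_2},\mathbf 1_B\rangle=1\ne0$. Subtracting multiples of $\mathbf 1$ does not help, because $\langle\mathbf 1,\mathbf 1_B\rangle=r+1=0$ in $\overline{\F}$.

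More decisively, $D^\perp$ contains no word of weight $O(r)$. If $x\in D^\perp$ and $x_\omega\ne0$, each of the $r^2$ blocks through $\omega$ must contain a further point of $\mathrm{supp}(x)$. These blocks are pairwise disjoint away from $\omega$, so $\mathrm{wt}(x)\ge r^2+1$. Whether $D^\perp$ has a nonzero word of weight at most $n/2$ is a genuinely nontrivial question, and you give no argument for it.

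\emph{The fallback.} This is the paper's idea, but as written it gives no contradiction. From $\sigma^6(C)=C$, Lemma \ref{lem_finFieldObs} yields only $r^3+1\le 2\cdot 3^6-2=1456$, which holds for $r=5,8,11$. Your appeal to cube roots of unity is vacuous, since $1$ is the only cube root of unity in characteristic $3$.

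What you need is this. The modules $D$ and $D^\perp$ are defined over $\F_3$, so $\sigma$ fixes them. By Lemma \ref{lem_sigCommuG}, $\sigma$ therefore permutes the \emph{three} candidate modules of dimension $\dim C$ lying between $D^\perp$ and $D$. Hence the $\langle\sigma\rangle$-orbit of $C$ has length $b\le 3$, and $\sigma^b(C)=C$. This gives $r^3+1\le 2\cdot 3^3-2=52$, contradicting $r\ge 5$.

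\emph{What the count of three requires.} It needs $X_1,X_2,X_3$ to be pairwise nonisomorphic, which you assume silently. Otherwise $D/D^\perp$ has infinitely many submodules of a given dimension, and there is no uniform bound on $b$. The paper obtains nonisomorphism from the decomposition numbers in \cite{OkuDec} and \cite[Lemma~3.14]{KleshchevTiep2009}.

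Similarly, irreducibility of $D^\perp/J$ and $A/D$ alone does not force $D^\perp\le C\le D$. You need $D^\perp/J=\mathrm{soc}(A/J)$, applied to both $C$ and $C^\perp$. This is part of the structural information the paper takes from the cited sources.
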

\begin{proof}
We continue with the above arguments. For the cases (1) and (2) listed preceding this theorem,  $D$ and $D^\perp$ are all the proper $\overline{\F} S$-submodules of $M$ of dimension larger than $1$, which we have excluded in Lemma \ref{lem_unitalD}. It remains to consider the case $p=3$. We have $D^\perp<C, C^\perp<D$ by Lemma \ref{lem_unitalD}. By the information about the decomposition matrix of the irreducible complex character $\chi_{q^3}$ (corresponding to $A$) in \cite{OkuDec}, we deduce that $D/D^\perp=V_1\oplus V_2\oplus V_3$, where the $V_i$'s are pairwise nonisomorphic irreducible $\overline{\F} S$-submodules of dimension $m=(r-1)(r^2-r+1)/3$. The fact the $V_i$'s are nonisomorphic $\overline{\F} S$-modules also follows from \cite[Lemma 3.14]{KleshchevTiep2009}.
It follows that $C/D^\perp$, $C^\perp/D^\perp$ are the direct sums of one or two of the $V_i$'s, and so $k=\dim(C)=r^2-r+1+im$ for some $i\in\{1,2\}$. By considering $C^\perp$ instead of $C$ if necessary, we assume $i=1$ without loss of generality.

Let $\sigma$ be as defined in \eqref{eqn_sigma} with $p=3$. Then $\sigma$ fixes $\mathbf{1}_B$ for each block $B$, and so it stabilizes both $D$ and $D^\perp$. There are exactly three candidate modules for $C$, and $\sigma$ permutes them by Lemma \ref{lem_sigCommuG}. It follows that either $\sigma^2(C)=C$ or $\sigma^3(C)=C$. By Lemma \ref{lem_finFieldObs}, we deduce that $n=r^3+1\le 2\cdot3^3-2=52$. On the other hand, we have $r>3$ since $3\mid r+1$ by Table \ref{tab:remaining-families}, a contradiction.
\end{proof}

\subsection{The case $S={}^2G_2(3^{2a+1})$, $a\ge1$}\label{sec:ree}

Let $r=3^{2a+1}$ with $a\ge 1$, and set $m=3^{a+1}$. In this case, we have $n=|\Omega|=r^3+1$, $S={}^2G_2(3^{2a+1})$. The $\overline{\F} S$-submodule structure of $M=\overline{\F}^\Omega$ are available in Appendix~D.2 of \cite{Hiss1990} and \cite[Proposition~3.8]{LandrockMichler1980}; see also \cite[Section~4.3]{GuralnickTiep2011}.
\begin{lemma}\label{lem_Reep2}
We have $p=2$.
\end{lemma}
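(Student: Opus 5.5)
From Table~\ref{tab:remaining-families}, the conditions for $S={}^2G_2(r)$ are $p\mid 2(r+1)(r+m+1)$ with $m=\sqrt{3r}=3^{a+1}$. The plan is to show that every odd prime $p$ dividing $(r+1)(r+m+1)$ leads to a contradiction. Then $p=2$ is the only possibility left.

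First we record the arithmetic. Since $r=3^{2a+1}$ is odd, $r+1$ is even and $r+m+1$ is odd. Any odd prime $p$ dividing $r+m+1$ satisfies $p\le r+m+1$, and any odd prime $p$ dividing $r+1$ satisfies $p\le (r+1)/2$. Both bounds are far smaller than $n=r^3+1$, so
\[
2p-2<2r+2m+2<r^3+1=n .
\]
Therefore, by Lemma~\ref{lem_uniser}, it suffices to show that for each such odd $p$ the $\overline{\F}S$-submodules of $M$ form a chain under inclusion. A slightly weaker statement would also do: every submodule of a given dimension is unique, or more generally is fixed by some power $\sigma^b$ with $p^b$ small enough that Lemma~\ref{lem_finFieldObs} still gives $n\le 2p^b-2$, which is false.

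The main step is to read off the submodule lattice of $M$ for odd $p\mid (r+1)(r+m+1)$ from Appendix~D.2 of \cite{Hiss1990}, \cite[Proposition~3.8]{LandrockMichler1980}, and \cite[Section~4.3]{GuralnickTiep2011}. Here $M$ is the permutation module on the Ree--Tits ovoid, whose complex character is $1+\chi$ with $\chi$ of degree $r^3$. For $p\mid r+1$ odd, I expect the following from the known decomposition of $\chi$ modulo $p$, using the Brauer tree of the cyclic-defect blocks. The heart $\overline{A}$ is either irreducible or uniserial with two composition factors of distinct dimensions. Moreover, since $p\nmid n$, we have $M=J\oplus A$, so the full submodule lattice is governed by the lattice of $A$.

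Two complications arise. First, the case $p\mid r+m+1$ being left open in Mortimer's table suggests the heart may be reducible here. I would handle that case exactly as in the Suzuki case (Theorem~\ref{thm:suzuki}): use the chain structure described in \cite[Section~4.3]{GuralnickTiep2011} and apply Lemma~\ref{lem_uniser}. Second, if $\overline{A}$ is irreducible, Lemma~\ref{lem_standard-subspaces} already forces $C\in\{0,J,A,M\}$, which contradicts nontriviality. If instead the composition factors of $A$ are pairwise non-isomorphic but not in a chain, I fall back on the weaker criterion above. Because $\sigma$ preserves dimension (Lemma~\ref{lem_sigCommuG}), it permutes the boundedly many submodules of each dimension, so a bounded power $\sigma^b$ fixes $C$. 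The bound $n\le 2p^b-2$ from Lemma~\ref{lem_finFieldObs} then fails for small $b$ once $r\ge 27$.

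The main obstacle is extracting the precise submodule structure from the cited decomposition data for each odd prime divisor. Once that structure is in hand, the contradiction is purely arithmetic.
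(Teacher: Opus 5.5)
Your plan is essentially the paper's proof. For odd $p$ the paper simply cites \cite[Section~4.3]{GuralnickTiep2011} for the fact that the $\overline{\F}S$-submodules of $M$ form a chain, then applies Lemma~\ref{lem_uniser} with $2p-2\le 2r+2m<r^3+1$, so your fallback criteria are not needed.

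Two side remarks are wrong and should be dropped. First, $p\nmid n$ fails: $n=r^3+1=(r+1)(r+m+1)(r-m+1)$ is divisible by every admissible $p$, so $J\le A$ and $M\ne J\oplus A$. Indeed, if $M=J\oplus A$ held, then $J$ and $A$ would be incomparable and the chain hypothesis of Lemma~\ref{lem_uniser} could never be met. Second, the case Mortimer left open was $p\mid r-m+1$, not $p\mid r+m+1$.
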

\begin{proof}
By Table \ref{tab:remaining-families}, either $p$ is odd and $p$ divides $r+1$ or $r+m+1$, or $p=2$. Suppose to the contrary that $p$ is odd. By \cite[Section 4.3]{GuralnickTiep2011}, the $\overline{\F} S$-submodules of $M$ form a chain under inclusion. By Lemma \ref{lem_uniser}, we deduce that $r^3+1\le 2p-2$. It follows that $3^{6a+3}+1\le 2\cdot 3^{2a+1}+2\cdot 3^{a+1}$, which never holds for $a\ge 1$. This completes the proof.
\end{proof}

By \cite[Section~4.3]{GuralnickTiep2011}, there is a $\overline{\F} S$-submodule $B$ of $M$ such that $J<B<B^\perp<A$, where $J=\mathrm{soc}(M)$ and $B/J=\mathrm{soc}(M/J)$. Both $J$ and $B/J$ are irreducible. Moreover, there are pairwise nonisomorphic irreducible $\overline{\F} S$-modules $U_1,U_2,U_1^*$ such that $B^\perp/B\cong U_1\oplus U_2\oplus U_1^*$, and $\dim(U_2)<\dim(U_1)=\dim(U_1^*)$.

\begin{theorem}\label{thm:ree}
Take notation as above. Then $\mathrm{soc}(G)$ can not be $S={}^2G_2(3^{2a+1})$, $a\ge1$.
\end{theorem}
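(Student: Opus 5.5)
By Lemma~\ref{lem_Reep2} we have $p=2$. The plan is to mimic the argument of Theorem~\ref{thm:unitary}: first pin down where $C$ must sit in the submodule lattice of $M$, then use the Frobenius map $\sigma$ together with Lemma~\ref{lem_finFieldObs} to get a field of definition that is far too small.

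\textbf{Locating $C$.} Since $J=\mathrm{soc}(M)$ is the unique irreducible submodule, and $\dim C,\dim C^\perp\ge 2$, both $C$ and $C^\perp$ contain $J$. By the same token, $C/J$ contains $\mathrm{soc}(M/J)=B/J$, so $B\le C$ and similarly $B\le C^\perp$. Taking perps gives $C\le B^\perp$. Thus $B\le C,C^\perp\le B^\perp$.

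\textbf{Excluding the endpoints.} Next I rule out $C\in\{B,B^\perp\}$; by Lemma~\ref{lem_duality} it suffices to treat $B$. One route is a dimension count against Lemma~\ref{lem_basic-bound}: $B/J$ is the Steinberg-like irreducible from Hiss's Appendix~D.2. If $\dim B$ is not small, I would instead exhibit a low-weight vector in $B$, e.g.\ the indicator of a block of the Ree unital (the $2$-$(r^3+1,r+1,1)$ design) or its span modulo $2$, so that $d(B)\ll n-\dim B+1$. I expect the available data to make this direct.

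\textbf{Counting candidates.} With the endpoints gone, $C/B$ is a proper nonzero submodule of $B^\perp/B\cong U_1\oplus U_2\oplus U_1^*$. These summands are pairwise nonisomorphic, so $C/B$ is a direct sum of a nonempty proper subset of them, giving at most six candidates for $C$. By Lemma~\ref{lem_sigCommuG}, $\sigma$ preserves $J$, $B$ and $B^\perp$, since these are the unique submodules of their dimensions in the relevant positions. It also preserves dimensions, so it permutes the candidates with a given dimension. The possible dimensions are $\dim B$ plus a subset sum of $\{d_1,d_2,d_1\}$ with $d_2<d_1$. Hence each dimension class has at most two members, namely $U_1$ versus $U_1^*$, or $U_1\oplus U_2$ versus $U_1^*\oplus U_2$. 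Therefore $\sigma^2(C)=C$.

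\textbf{Conclusion.} Lemma~\ref{lem_finFieldObs} then gives $n=r^3+1\le 2\cdot 2^2-2=6$, which contradicts $r\ge 27$.

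The main obstacle is the endpoint exclusion $C\ne B,B^\perp$. It needs an explicit low-weight word or explicit dimensions. If $\sigma$ fixes $B$, note that a $\sigma$-fixed $C$ gives $n\le 2\cdot 2-2$, which is absurd. So in fact any $C$ fixed by $\sigma^b$ with $b$ small is excluded automatically, and since $\sigma$ fixes $B$ and $B^\perp$, the endpoints are handled by Lemma~\ref{lem_finFieldObs} with $b=1$. That makes the obstacle disappear, and the only real input is the nonisomorphism of $U_1,U_2,U_1^*$.
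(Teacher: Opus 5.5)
Your proof is correct and is essentially the paper's proof, provided you drop the tentative low-weight-word route and keep the observation in your final paragraph. The steps match the paper's: $p=2$, then $B\le C,C^\perp\le B^\perp$, then the endpoints $C\in\{B,B^\perp\}$ are excluded by Lemma~\ref{lem_finFieldObs} with $b=1$ because $\sigma$ fixes $B$ and $B^\perp$, and finally $\sigma^2(C)=C$ gives $n\le 6$. The one cosmetic difference is that the paper first assumes $\dim C\le\dim C^\perp$ so that $C/B$ is a single summand, whereas you treat all six subset-sums at once by noting that each dimension class has at most two members; both versions work.
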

\begin{proof}
We continue with the above arguments. By Lemma \ref{lem_Reep2}, we have $p=2$. Since $J=\operatorname{soc}(M)$ and $B/J=\operatorname{soc}(M/J)$ are irreducible, both $C$ and $C^\perp$ contain $B$. Consequently, $B\le C\le B^\perp$ and $B\le C^\perp\le B^\perp$. Let $\sigma$ be as defined in \eqref{eqn_sigma} with $p=2$. By Lemma \ref{lem_sigCommuG}, $\sigma(B)$, $\sigma(C)$ and $\sigma(B^\perp)$ are also $\overline{\F} S$-submodules of $M$. By comparing dimensions, we deduce that $\sigma(B)=B$ and $\sigma(B^\perp)=B^\perp$. If $C=B$ or $B^\perp$, then $\sigma(C)=C$ by considering dimensions. By Lemma \ref{lem_finFieldObs} we deduce that that $n=r^3+1\le 2p-2=2$: a contradiction. Therefore, we have $B<C<B^\perp$. We similarly have $B<C^\perp<B^\perp$. By replacing $C$ with $C^\perp$, we assume without loss of generality that $\dim(C)\le\dim(C^\perp)$.
Since $B^\perp/B\cong U_1\oplus U_2\oplus U_1^*$ and the three direct summands are
nonisomorphic, we deduce that $C/B\in\{U_1,U_2,U_1^*\}$. By comparing dimensions, we deduce that either $C=\sigma(C)$, or $C=\sigma^2(C)$. By Lemma \ref{lem_finFieldObs}, we deduce that $r^3+1\le2\cdot2^2-2=6$: a contradiction. This completes the proof.
\end{proof}

We have  examined the six cases in Table \ref{tab:remaining-families} in Theorems \ref{thm:projective-linear}, \ref{thm:symplectic}, \ref{thm:psl2}, \ref{thm:suzuki}, \ref{thm:unitary} and \ref{thm:ree} respectively, and this completes the proof of Theorem~\ref{thm:AS}. Together with Proposition \ref{prop:mortimer-reduction}, this completes the proof of Theorem \ref{thm:main}.

\section{Conclusions}
\label{sec:concluding}

In this paper, we verify the MDS conjecture for $[n,k]_q$ codes $C\le\F_q^\Omega$ with a $2$-transitive permutation automorphism group $G=\mathrm{PAut}(C)\le\sym(\Omega)$. When $(G,\Omega)$ are the infinite families of almost simple type in Table \ref{tab:remaining-families}, then $C$ is equivalent to the scalar extension of the hexacode from $\F_4$ to $\F_q$ in Example \ref{ex:two-transitive-mds-codes} (4). The codes in (2) and (3) of Example \ref{ex:two-transitive-mds-codes} admit $2$-transitive permutation group $G$ of affine type. The classification theorems in Section \ref{sec:almost-simple} heavily rely on the detailed information about the modular representations of the respective nonabelian simple groups. It is conceivably more challenging to determine all the MDS codes with a $2$-transitive permutation group $G$ of affine type, since such information is not available in general. We leave it an open problem to determine all the MDS codes with a $2$-transitive permutation group.

Our main theorem, Theorem \ref{thm:main}, initiates the study of MDS codes with a highly transitive permutation group.  It will be of great theoretical interest to verify the MDS conjecture for linear codes that satisfy weaker symmetry hypotheses, particularly for cyclic codes. Further progress in this direction will require new insights and new techniques to overcome our heavy reliance on deep results in modular representation theory.\medskip

\noindent\textbf{Acknowledgments.} The authors used large language model to help with the understanding of modular representations of finite $2$-transitive groups, and they are responsible for the correctness of all the results in this paper. This research was supported by the National Key R\&D Program of China under grant number 2025YFA1017700 and NSFC grants No. 12225110. Haihua Deng is supported by the National Natural Science Foundation of China under Grant No.~123B2011, and the Postdoctoral Fellowship Program and China Postdoctoral Science Foundation under Grant No.~BX20250059. The research of Andrey V. Vasil'ev was carried out within the framework of the Sobolev Institute of Mathematics project FWNF-2026-0017.


\begin{thebibliography}{99}


\bibitem{Ball2012}
S. Ball,
\emph{On sets of vectors of a finite vector space in which every subset of basis size is a basis},
J. Eur. Math. Soc. (JEMS) \textbf{14} (2012), no. 3, 733--748.


\bibitem{BallLavrauw2020}
S. Ball and M. Lavrauw,
\emph{Arcs in finite projective spaces},
EMS Surv. Math. Sci. \textbf{6} (2019), no. 1--2, 133--172.


\bibitem{BardoeSin2000}
M. Bardoe and P. Sin,
\emph{The permutation modules for \(\mathrm{GL}(n+1,\mathbb{F}_q)\) acting on \(\mathbb{P}^n(\mathbb{F}_q)\) and \(\mathbb{F}_q^{n+1}\)},
J. London Math. Soc. (2) \textbf{61} (2000), no. 1, 58--80.


\bibitem{Berger1993}
T. P. Berger,
\emph{Groupes de permutations des codes {MDS} affine-invariants},
Comm. Algebra \textbf{21} (1993), no. 1, 239--254.


\bibitem{BienertKlopsch2010}
R. Bienert and B. Klopsch,
\emph{Automorphism groups of cyclic codes},
J. Algebraic Combin. \textbf{31} (2010), no. 1, 33--52.


\bibitem{BlokhuisBruenThas1988}
A. Blokhuis, A. A. Bruen, and J. A. Thas,
\emph{On {M.D.S.} codes, arcs in \(\mathrm{PG}(n,q)\) with \(q\) even, and a solution of three fundamental problems of {B. Segre}},
Invent. Math. \textbf{92} (1988), no. 3, 441--459.


\bibitem{BlokhuisBruenThas1990}
A. Blokhuis, A. A. Bruen, and J. A. Thas,
\emph{Arcs in \(\mathrm{PG}(n,q)\), {MDS}-codes and three fundamental problems of {B. Segre}: some extensions},
Geom. Dedicata \textbf{35} (1990), 1--11.


\bibitem{BosmaCannonPlayoust1997}
W. Bosma, J. J. Cannon, and C. Playoust,
\emph{The {Magma} algebra system. {I}. The user language},
J. Symbolic Comput. \textbf{24} (1997), no. 3--4, 235--265.


\bibitem{Burichenko2000}
V. P. Burichenko,
\emph{Extensions of abelian $2$-groups by means of $\mathrm{L}_2(q)$ with irreducible action},
Algebra and Logic \textbf{39} (2000), no. 3, 160--183.


\bibitem{Burnside1911}
W. Burnside, \emph{Theory of Groups of Finite Order}, 2nd ed., Cambridge University Press, Cambridge, 1911.


\bibitem{Bush1952}
K. A. Bush,
\emph{Orthogonal arrays of index unity},
Ann. Math. Statist. \textbf{23} (1952), no. 3, 426--434.


\bibitem{Cameron1999}
P. J. Cameron,
\emph{Permutation Groups},
London Mathematical Society Student Texts, vol. 45,
Cambridge University Press, Cambridge, 1999.


\bibitem{ChigiraHaradaKitazume2014}
N. Chigira, M. Harada, and M. Kitazume,
\emph{On the classification of extremal doubly even self-dual codes with \(2\)-transitive automorphism groups},
Des. Codes Cryptogr. \textbf{73} (2014), no. 1, 33--35.


\bibitem{DevillersEtAl2025}
A. Devillers, M. Giudici, D. R. Hawtin, L. Klawuhn, and L. Morgan,
\emph{Linear dimension of group actions},
arXiv:2512.16079 [math.GR], 2025.


\bibitem{DixonMortimer1996}
J. D. Dixon and B. Mortimer,
\emph{Permutation Groups},
Graduate Texts in Mathematics, vol. 163,
Springer-Verlag, New York, 1996.


\bibitem{FengHollmannLiXiang2026}
T. Feng, H. D. L. Hollmann, W. Li, and Q. Xiang,
\emph{The permutation automorphism groups of irreducible cyclic codes},
arXiv:2603.01904 [math.CO], 2026.


\bibitem{GeiselhartEtAl2021}
M. Geiselhart, A. Elkelesh, M. Ebada, S. Cammerer, and S. ten Brink,
\emph{Automorphism ensemble decoding of Reed--Muller codes},
IEEE Trans. Commun. \textbf{69} (2021), no. 10, 6424--6438.


\bibitem{GuendaGulliver2013}
K. Guenda and T. A. Gulliver,
\emph{On the permutation groups of cyclic codes},
J. Algebraic Combin. \textbf{38} (2013), no. 1, 197--208.


\bibitem{GuralnickTiep2011}
R. M. Guralnick and P. H. Tiep,
\emph{First cohomology groups of Chevalley groups in cross characteristic},
Ann. of Math. (2) \textbf{174} (2011), no. 1, 543--559.


\bibitem{HirschfeldKorchmaros1996}
J. W. P. Hirschfeld and G. Korchm{\'a}ros,
\emph{On the embedding of an arc into a conic in a finite plane},
Finite Fields Appl. \textbf{2} (1996), 274--292.


\bibitem{HirschfeldKorchmaros1998}
J. W. P. Hirschfeld and G. Korchm{\'a}ros,
\emph{On the number of rational points on an algebraic curve over a finite field},
Bull. Belg. Math. Soc. Simon Stevin \textbf{5} (1998), 313--340.


\bibitem{HirschfeldThas1991}
J. W. P. Hirschfeld and J. A. Thas,
\emph{General Galois Geometries},
Clarendon Press, Oxford, 1991.


\bibitem{Hiss1990}
G. Hiss,
\emph{Zerlegungszahlen endlicher Gruppen vom Lie-Typ in nicht-definierender Charakteristik},
Habilitationsschrift, RWTH Aachen, 1990.


\bibitem{Hiss2004}
G. Hiss,
\emph{Hermitian function fields, classical unitals, and representations of \(3\)-dimensional unitary groups},
Indag. Math. (N.S.) \textbf{15} (2004), no. 2, 223--243.


\bibitem{Huffman1998}
W. C. Huffman,
\emph{Codes and groups},
in \emph{Handbook of Coding Theory}, vol. 2, edited by V. S. Pless, W. C. Huffman, and R. A. Brualdi, pp. 1345--1440,
North-Holland, Amsterdam, 1998.


\bibitem{HuffmanPless2003}
W. C. Huffman and V. Pless,
\emph{Fundamentals of Error-Correcting Codes},
Cambridge University Press, Cambridge, 2003.


\bibitem{KaufmanLub}
T. Kaufman and A. Lubotzky,
\emph{Edge transitive {R}amanujan graphs and symmetric {LDPC} good
              codes},
in \emph{S{TOC}'12---{P}roceedings of the 2012 {ACM} {S}ymposium on
              {T}heory of {C}omputing}, pp. 359--366,
ACM, New York, 2012.


\bibitem{KleshchevTiep2009}
A. S. Kleshchev and P. H. Tiep,
\emph{Representations of finite special linear groups in
non-defining characteristic},
Adv. Math. \textbf{220} (2009), no.~2, 478--504.


\bibitem{KnappRodrigues2021}
W. D. Knapp and B. G. Rodrigues,
\emph{A useful tool for constructing linear codes},
J. Algebra \textbf{585} (2021), 422--446.


\bibitem{KnappSchmid1980}
W. Knapp and P. Schmid,
\emph{Codes with prescribed permutation group},
J. Algebra \textbf{67} (1980), no. 2, 415--435.


\bibitem{LandrockMichler1980}
P. Landrock and G. O. Michler,
\emph{Principal \(2\)-blocks of the simple groups of Ree type},
Trans. Amer. Math. Soc. \textbf{260} (1980), no. 1, 83--111.


\bibitem{LidlNiederreiter1997}
R. Lidl and H. Niederreiter,
\emph{Finite Fields},
2nd ed., Encyclopedia of Mathematics and its Applications, vol. 20,
Cambridge University Press, Cambridge, 1997.


\bibitem{MaYan2025}
J. Ma and G. Yan,
\emph{On automorphism groups of binary cyclic codes},
Des. Codes Cryptogr. \textbf{93} (2025), no. 5, 1271--1282.


\bibitem{MacWilliams1964}
F. J. MacWilliams,
\emph{Permutation decoding of systematic codes},
Bell System Tech. J. \textbf{43} (1964), no. 1, 485--505.


\bibitem{MacWilliamsSloane1977}
F. J. MacWilliams and N. J. A. Sloane,
\emph{The Theory of Error-Correcting Codes},
North-Holland, Amsterdam, 1977.


\bibitem{MalevichWillems2014}
A. Malevich and W. Willems,
\emph{On the classification of the extremal self-dual codes over small fields with \(2\)-transitive automorphism groups},
Des. Codes Cryptogr. \textbf{70} (2014), no. 1--2, 69--76.


\bibitem{mortimer1980modular}
B. Mortimer,
\emph{The modular permutation representations of the known doubly transitive groups},
Proc. London Math. Soc. (3) \textbf{41} (1980), no. 1, 1--20.


\bibitem{OkuDec}
T. Okuyama and K. Waki, \emph{Decomposition numbers of ${\rm SU}(3,q^2)$}, J. Algebra {\bf 255} (2002), no.~2, 258--270.


\bibitem{PaceSonnino2017}
N. Pace and A. Sonnino,
\emph{On linear codes admitting large automorphism groups},
Des. Codes Cryptogr. \textbf{83} (2017), no. 1, 115--143.


\bibitem{SastrySin2002}
N. S. N. Sastry and P. Sin,
\emph{On the doubly transitive permutation representations of \(\mathrm{Sp}(2n,\mathbb{F}_2)\)},
J. Algebra \textbf{257} (2002), no. 2, 509--527.


\bibitem{Segre1955Curve}
B. Segre,
\emph{Curve razionali normali e \(k\)-archi negli spazi finiti},
Ann. Mat. Pura Appl. \textbf{39} (1955), 357--379.


\bibitem{Singleton1964}
R. C. Singleton,
\emph{Maximum distance \(q\)-nary codes},
IEEE Trans. Inform. Theory \textbf{10} (1964), 116--118.


\bibitem{StormeThas1993}
L. Storme and J. A. Thas,
\emph{{M.D.S.} codes and arcs in \(\mathrm{PG}(n,q)\) with \(q\) even: an improvement of the bounds of Bruen, Thas, and Blokhuis},
J. Combin. Theory Ser. A \textbf{62} (1993), no. 1, 139--154.


\bibitem{Voloch1991}
J. F. Voloch,
\emph{Complete arcs in Galois planes of non-square order},
in \emph{Advances in Finite Geometries and Designs}, edited by J. W. P. Hirschfeld, D. R. Hughes, and J. A. Thas, pp. 401--406,
Oxford University Press, Oxford, 1991.


\end{thebibliography}
\end{document}